\newtheorem{thm}{Theorem}[section]
\newtheorem{cor}[thm]{Corollary}
\newtheorem{lem}[thm]{Lemma}
\newtheorem{fig}[thm]{Figure}
\theoremstyle{remark}
\newtheorem{rem}[thm]{Remark}
\theoremstyle{definition}
\newtheorem{nota}[thm]{Notation}
\theoremstyle{definition}
\newtheorem{defi}[thm]{Definition}
\theoremstyle{definition}
\newtheorem{ex}[thm]{Example}
\title{Baumslag-Solitar graphs through colours}
\author{Katie Buchhorn}
\begin{document}
\maketitle
\begin{center}
Supervised by Sam Webster and Nathan Brownlowe
\end{center}
\subsection*{Abstract}
Higher-rank graphs are, as the name suggests, higher-dimensional analogues of directed graphs which we will define using category theory. The whole idea of my project was to construct what we call a Baumslag-Solitar graph, a higher-rank graph. They're too abstract an idea to picture, but in building Baumslag-Solitar graphs through coloured graphs, we are able to do so. I modelled my work on constructing Baumslag-Solitar graphs off Hazlewood, Raeburn,  Sims, and Webster's \emph{Remarks on some fundamental results about higher-rank graphs and their $C^*$-algebras} [2]. This is a novel approach in understanding the generalisation of directed graphs.

\section{Motivation in constructing $k$-graphs}
\subsection{The basic idea}
\begin{defi}
A \emph{directed graph} $E = ( E^0, E^1, r, s)$ consists of two countable sets $E^0, E^1$ and functions $r,s : E^1 \rightarrow E^0$.
\end{defi}
\vspace{7pt}

The set $E^0$ is the set of vertices, $E^1$ is the set of edges, $r$ is called the range map and $s$ is called the source map.
\vspace{7pt}

For $f\in E^1$, we have
\[
\begin{tikzpicture}[>=stealth,xscale=3]
\node[inner sep = 0.8pt, label=below:{$r(f)$}] (00) at (0,0) {$\scriptstyle\bullet$};
\node[inner sep = 0.8pt, label=below:{$s(f)$}] (10) at (1,0) {$\scriptstyle\bullet$};
\draw[black, <-] (00) to node[auto,black]{$f$} (10) ;
\end{tikzpicture}
\]

A \emph{path of length $n$} is a sequence of edges $\mu=\mu_1\mu_2...\mu_n$ such that $s(\mu_i)=r(\mu_{i+1})$. Denote by $E^n$ the set of all paths of length $n$ and define $E^*:= \bigcup_{n\in \mathbb{N}} E^n$.
\vspace{7pt}

We will take this idea of a directed graph and generalise it to a higher-dimension, then an abstract semigroup. This generalised graph will be defined using categorical framework.

\begin{defi}\label{category}
A \emph{category} $\mathcal{C}$ consists of two sets $C^0$ and $C^*$, two maps $r,s: C^* \rightarrow C^0$, a partially defined product $(f,g) \mapsto fg$ from $\{ (f,g) \in C^* \times C^* : s(f) = r(g)\}$ to $C^*$, and distinguished elements $\{ \iota_v \in C^* : v \in C^0\}$ satisfying
\begin{itemize}
\item[(i)]$r(fg) = r(f)$ and $s(fg) = s(g)$;
\item[(ii)]$(fg)h = f(gh)$ when $s(f) = r(g)$ and $s(g) = r(h)$; 
\item[(iii)] $r(\iota_v) = v = s(\iota_v)$ and $\iota_v f = f$, $g\iota_v = g$ when $r(f) = v$ and $s(g) = v$.
\end{itemize}
\end{defi}
The similarity between directed graphs and categories is the reason why the framework of categories is the natural choice in defining the generalisation of directed graphs.

\subsection{Generalisation of a directed graph in $\mathbb{N}^k$}
Instead of the length of a path having values in $\mathbb{N}$, we will explore the higher-dimensional analogue of a directed graph, first with the length of a path defined in $\mathbb{N}^k$ as was first done by Hazelwood, Raeburn, Sims and Webster [2], and secondly with the length of a path defined in the Baumslag-Solitar semigroup.
\vspace{10pt}

A \emph{$k$-coloured graph} is a directed graph $E$ together with a map $c:E^1\rightarrow \{c_1,c_2,...,c_k\}$. So a path $x \in E^*$ has both colour and shape determined by $q: \{c_1,c_2,...,c_k\}\rightarrow \mathbb{N}^k$ where $q(c_i)=e_i$ for all $i$. For $x\in E^*$, define $d^*(x)= q(c(x))$. We call $d^*(x)$ the \emph{degree} of a path $x\in E^*$ which we can see is assigned some higher-dimensional value in $\mathbb{N}^k$.

\begin{defi}
For $m \in (\mathbb{N} \cup \{ \infty \} )^k$, define a $k$-coloured graph $E_{k,m}$ by
\begin{align*}
&E^0_{k,m} = \{n \in \mathbb{N}^k: 0 \le n \le m\} \\
&E^1_{k,m} = \{n + v_i: n, n + e_i \in E^0_{k,m} \} \\
&r(n+v_i) = n, s(n+v_i) = n + e_i, \text {and } c(n + v_i) = c_i 
\end{align*}
\end{defi}

A \emph{graph morphism} $\psi$ from a directed graph $E$ to a directed graph $F$  consists of functions $\psi^0:E^0 \rightarrow F^0$ and $\psi^1:E^1 \rightarrow F^1$ such that $r_F(\psi^1(e))= \psi^0(r_E(e))$ and $s_F(\psi^1(e))=\psi^0(s_E(e))$ for all $e\in E^1$. A \emph{coloured-graph morphism} is a graph morphism $\psi$ such that $c_E(e) = c_F(\psi(e))$ for every $e \in E^1$.
\vspace{10pt}

For a given coloured-graph morphism $\lambda: E_{k,m} \rightarrow E$ we say $\lambda$ has degree $m$ and write $d(\lambda)=m$, and define $r(\lambda):= \lambda(0)$ and $s(\lambda) := \lambda(m)$. 

\vspace{10pt}

For $p,q \in \mathbb{N}^k$ with $p \le q$, define $E_{k,[p,q]}$ to be the subgraph of $E_{k,q}$ such that 
\begin{align*}
&E_{k,[p,q]}^0 = \{ n \in \mathbb{N}^k : p \le n \le q\} \\
&E_{k,[p,q]}^1 = \{ x \in E_{k,q}^1: s(x), r(x) \in E_{k,[p,q]}^0\}
\end{align*}

Given a coloured-graph morphism $\lambda: E_{k,m} \rightarrow E$ and $p,q \in \mathbb{N}^k$ such that $p \le q \le m$, define $\lambda|_{E_{k,[p,q]}}^*: E_{k,[p,q]} \rightarrow E$ by
\begin{align*}
\lambda|_{E_{k,[p,q]}}^*(a) = \lambda(p+a)
\end{align*}
The star is to remind us that this non-standard restriction involves a translation.
\vspace{7pt}

Given any $k$-coloured graph $E$ and distinct $i,j \in \{1, ..., k\}$, an \emph{$\{i, j\}$- square} in $E$ is a coloured directed graph morphism $\lambda : E_{k, e_i + e_j} \rightarrow E$. A collection $\mathcal{C}$ of squares in $E$  is \emph{complete} if each $x \in F^*$ with $c(x) = c_ic_j$ and $i \neq j$, there is a unique $\phi \in \mathcal{C}$ such that $x = \phi(v_i)\phi(e_i + v_j)$. We write $\phi(v_i)\phi(e_i + v_j) \sim \phi(v_j)\phi(e_j + v_i)$. 
\vspace{7pt}

If $\lambda:E_{k,m}\rightarrow E$ is a coloured-graph morphism and $\phi$ is a square in $E$, then $\phi$ \emph{occurs in} $\lambda$ if there exists $n\in \mathbb{N}^k$ such that $\phi(x)=\lambda(x+n)$ for all $x\in E_{k, e_i + e_j}$.
\vspace{7pt}

Let $E$ be a $k$-coloured graph, and $m\in \mathbb{N}^k \setminus \{0\}$. Fix $x\in E^*$ and a coloured-graph morphism $\lambda : E_{k,m} \rightarrow E$. We say $x$ \emph{traverses} $\lambda$ if the shape $q(c(x))$ of $x$ is equal to $d^*(x)$ and $\lambda(q(c(x_1x_2...x_{l-1}))+v_{c_{x_l}})=x_l$ for all $0<l\le |m|$. If $m=0$, then $x\in E^0$ and $dom(\lambda)=\{0\}$, and we say $x$ traverses $\lambda$ if $x=\lambda(0)$.

\subsection{Some major results}
\begin{thm}\label{trav}Let $E$ be a 2-coloured graph and let $\mathcal{C}$ be a complete collection of squares in $E$. For every $x \in E^*$ there is a unique $\mathcal{C}$-compatible coloured-graph morphism $\lambda_x:E_{2, d(x)}\rightarrow E$ such that $x$ traverses $\lambda_x$.
\end{thm}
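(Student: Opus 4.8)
The plan is to argue by induction on the length $n = |d(x)|$ of the path $x$, using completeness of $\mathcal{C}$ as an engine that \emph{forces} the morphism one square at a time. Recall that $x$ traverses $\lambda_x$ exactly when $\lambda_x$ sends the $l$-th edge of the monotone ``staircase'' cut out by the colour word $c(x)$ to $x_l$; thus the condition that $x$ traverses $\lambda_x$ already pins down the value of $\lambda_x$ on every vertex and edge of this staircase, and the whole problem is to show that the remaining cells of the rectangle $E_{2,d(x)}$ can be filled in, in exactly one $\mathcal{C}$-compatible way (that is, so that every square occurring in $\lambda_x$ lies in $\mathcal{C}$). I will treat existence and uniqueness in parallel, since the same filling procedure produces the morphism and shows it is forced. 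For the base cases, when $n=0$ the path is a vertex, $E_{2,0}=\{0\}$, and the only choice is $\lambda_x(0)=x$; when $n=1$ the domain is a single edge of the colour of $x$, and $\lambda_x$ must send it to $x$. In both cases no square occurs, so $\mathcal{C}$-compatibility is automatic and $\lambda_x$ is unique.

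For the inductive step, write $x = y\,x_n$ with $y=x_1\cdots x_{n-1}$. Since $x$ is a path we have $r(x_n)=s(x_{n-1})$, and $E_{2,d(y)}$ embeds as a subrectangle of $E_{2,d(x)}$, the added edge $x_n$ increasing the degree by $e_1$ or $e_2$. If $\lambda$ is any $\mathcal{C}$-compatible morphism traversed by $x$, its restriction to $E_{2,d(y)}$ is $\mathcal{C}$-compatible and traversed by $y$, so by the inductive hypothesis this restriction equals the unique $\lambda_y$; moreover $\lambda$ must send the last staircase edge to $x_n$. It therefore remains to show that the values of $\lambda$ on the new slab, a single new column if $c(x_n)=c_1$ or a new row if $c(x_n)=c_2$, are determined, and that a valid choice exists.

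Suppose $c(x_n)=c_1$ and $d(y)=(a,b)$, so we must fill the rightmost column. If $b=0$ the column is the single edge $x_n$ and there is nothing to fill. Otherwise, at the top cell we already know the left (colour $c_2$) edge from $\lambda_y$ and the top (colour $c_1$) edge $x_n$; their concatenation is a length-two path of colour $c_2c_1$ in $E$, so completeness supplies a \emph{unique} square $\phi\in\mathcal{C}$ having it as its $\phi(v_2)\phi(e_2+v_1)$ side, and $\phi$ then determines the bottom (colour $c_1$) and right (colour $c_2$) edges of that cell. The bottom edge of this cell is the top edge of the next cell down, so repeating the argument sweeps down the column, each step forced by a fresh application of completeness, until the bottom row is reached. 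Defining $\lambda$ on the new vertices and edges by these squares both exhibits an extension (existence) and shows that any such $\lambda$ must agree with it (uniqueness). The case $c(x_n)=c_2$ is identical after interchanging the two colours and sweeping along the new row.

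The step I expect to be the main obstacle is checking that this sweep genuinely assembles into a coloured-graph morphism and is $\mathcal{C}$-compatible, rather than merely a consistent list of local choices. Concretely, one must verify that the edges shared between consecutive cells are assigned the same value by the two squares meeting along them, so that $r$, $s$, and $c$ are respected globally; that the new squares glue compatibly with the old data $\lambda_y$ along the boundary; and that every square \emph{occurring} in the extended $\lambda$, not only those we explicitly filled, lies in $\mathcal{C}$. The bookkeeping of the $\mathbb{N}^2$-indices, together with the careful matching of colours so that completeness is always applied to a genuine $c_ic_j$ path with the correct $i,j$, is where the argument is most error-prone, even though each individual gluing is routine once set up.
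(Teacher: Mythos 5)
Your proposal is correct, and its skeleton matches the paper's: induction on $|x|$, a split according to whether the new edge's colour is the only one present in $y$ (in which case one simply appends the edge) or a genuine new column or row must be filled, and an appeal to completeness to obtain the corner square from the $c_jc_i$ path $\lambda_y((m-e_j)+v_j)f$. Where you genuinely diverge is in how the remainder of the new slab is produced. You run an explicit inner sweep down the column, invoking the existence-and-uniqueness clause of completeness one cell at a time and propagating the shared edge between consecutive cells; the paper instead fills everything below the corner square in a single stroke by applying the outer inductive hypothesis to an auxiliary path $z^jg^j$ of length $n$, where $z^j$ is a traversal of $\lambda_y|_{E_{2,m-e_j}}$ and $g^j$ is the new edge supplied by the corner square (this is its ``Claim 1''). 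The two mechanisms are equivalent --- unrolling the paper's recursive call reproduces your sweep --- but each buys something: the paper's version keeps the inductive step short and inherits $\mathcal{C}$-compatibility and uniqueness of the subcolumn for free from the hypothesis, at the cost of introducing the auxiliary traversal $z^j$; yours is more self-contained and makes the forcing visibly local, at the cost of the gluing checks you rightly flag. Those checks do all go through: the squares placed by consecutive steps of the sweep agree on their shared $c_1$-edge by construction, the assembled map respects $r$, $s$ and $c$ because each square does, and in two colours the squares occurring in $\lambda_x$ are exactly the unit cells of the rectangle, each of which either lies in $E_{2,d(y)}$ (hence is in $\mathcal{C}$ by induction) or meets the new column (hence is one of the squares your sweep drew from $\mathcal{C}$). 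So the proposal is a complete and correct alternative organization of the same argument.
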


\begin{proof}
We prove by induction over $|x|$. If $|x|=0$, then $x \in E^0$. Define the coloured-graph morphism $\lambda_x: E_{2, (0,0)} \rightarrow E$ by $\lambda_x((0,0)) \mapsto x$. Then $d(\lambda_x)=q(c(x))=(0,0)$ and $\lambda_x((0,0))=x$. Hence $x$ traverses $\lambda_x$.

\vspace{10pt}

$\lambda_x$ maps a single point  $(0,0) \in E_{2, (0,0)} $ to $x \in E$, there is only one way this can be done hence $\lambda_x$ is unique. No squares occur in $\lambda_x$, so it is trivial to see that $\lambda_x$ is $\mathcal{C}$-compatible.

\vspace{10pt}

Now suppose, for the inductive hypothesis, that for each $y\in E^*$ with $y\le n$ there exists a unique coloured-graph morphism $\lambda_y: E_{2, d(y)} \rightarrow E$ such that $y$ traverses $\lambda_y$. Fix a path $x\in E^*$ with $|x|=n+1$, and express $x=yf$ where $f\in E^1$ with $c(f)=c_i$. Let $m:= q(c(y))$. By the inductive hypothesis, there exists a unique $\mathcal{C}$-compatible coloured-graph morphism $\lambda_y$ such that $y$ traverses $\lambda_y$.

\vspace{10pt}

Consider the set $\{j\le 2: j \not= i, m_j>0\}$ as colours in the path $y$ that differ from $c(f)=c_i$ where $m=(m_1,m_2)$. Note that $m_j$ can be thought of as expressing how many edges of colour $j$ exist in $y$. Since we are only dealing with a maximum of two colours, we have two cases: (1) edges in $y$ are all colour $i$, or, (2) at least one edge in $y$ is colour $j$. That is,
\begin{align*}
|\{ j\le 2: j \not=i, m_j>0 \}| = \begin{cases} 0 \text{ :Case 1} \\
1 \text{ :Case 2} \end{cases}
\end{align*}

For Case 1, we can see that  $E_{2,d(x)} = E_{2,m} \cup E_{[m, m+e_i]}$.

\begin{fig}{Image of $E_{2,d(x)} $ for $i=1$}
\[
\begin{tikzpicture}[>=stealth,xscale=3]

\node[inner sep = 0.8pt, label=below:{$(0,0)$}] (00) at (0,0) {$\scriptstyle\bullet$};
\node[inner sep = 0.8pt, label=below:{$(1,0)$}] (10) at (1,0) {$\scriptstyle\bullet$};
\node[inner sep = 0.8pt, label=below:{$(2,0)$}] (20) at (2,0) {$\scriptstyle\bullet$};
\node[inner sep = 0.8pt] (30) at (3,0) {...};
\node[inner sep = 0.8pt, label=below:{$m$}] (40) at (3.1,0) {$\scriptstyle\bullet$};
\node[inner sep = 0.8pt, label=below:{$m +e_1$}] (50) at (4.1,0) {$\scriptstyle\bullet$};

\draw[blue, <-] (00) to node[auto,black]{$(0,0)+v_1$} (10) ;
\draw[blue, <-] (10) to node[auto,black]{$(1,0)+v_1$} (20) ;
\draw[blue, <-] (20) to node[auto,black]{$(2,0)+v_1$} (30) ;
\draw[blue, <-] (40) to node[auto,black]{$m+v_1$} (50) ;

\end{tikzpicture}
\]
\end{fig}

So define the coloured graph morphism $\lambda_x$ by the formulae
\begin{align}
\lambda_x |_{E_{2,m}} = \lambda_y,  \lambda_x(m+v_i) = f \text{ and } \lambda_x(m+e_i)=s(f)
\end{align}

\begin{fig}{Image of $\lambda_x$ for $i=1$}
\[
\begin{tikzpicture}[>=stealth,xscale=3]
\node[inner sep = 0.8pt, label=below:{$r(y_1)$}] (00) at (0,0) {$\scriptstyle\bullet$};
\node[inner sep = 0.8pt, label=below:{$s(y_1)=r(y_2)$}] (10) at (1,0) {$\scriptstyle\bullet$};
\node[inner sep = 0.8pt, label=below:{$s(y_2)= r(y_3)$}] (20) at (2,0) {$\scriptstyle\bullet$};
\node[inner sep = 0.8pt] (30) at (2.1,0) {...};
\node[inner sep = 0.8pt, label=below:{$s(y_n)=r(f)$}] (40) at (3.1,0) {$\scriptstyle\bullet$};
\node[inner sep = 0.8pt, label=below:{$s(f)$}] (50) at (4.1,0) {$\scriptstyle\bullet$};

\draw[blue, <-] (00) to node[auto,black]{$y_1$} (10) ;
\draw[blue, <-] (10) to node[auto,black]{$y_2$} (20) ;
\draw[blue, <-] (30) to node[auto,black]{$y_n$} (40) ;
\draw[blue, <-] (40) to node[auto,black]{$f$} (50) ;

\end{tikzpicture}
\]
\end{fig}

Hence $x$ traverses $\lambda_x$. To see that $\lambda_x$ is unique, suppose $x$ also traverses a coloured-graph morphism $\mu$. Then $d(x)=q(c(x))=d(\mu)$ and $\mu(q(c(x_1x_2...x_{l-1})) + v_{c(x_l)})= x_l$ for all $0 < l \le | n+1|$. Which roughly says that $\mu$ maps all successive edges in $E_{2,d(x)} $ to the path $x=yf \in E^*$, so then $\mu$ satisfies the formula (1) hence $\mu = \lambda_x$.

\vspace{10pt}

For Case 2 we have $i \not=j$ satisfying $m_j > 0$. Then
\begin{align*}
E_{2,m+e_i} = E_{2,m} \cup (E_{2, m+e_i -e_j}) \cup (E_{2, [m-e_j, m+e_i] })
\end{align*}

Claim 1: Let $\phi^j$ be the unique square in $\mathcal{C}$ traversed by $\lambda_y((m-e_j) + v_j)f$. Let $g^j = \phi^j(0+v_i)$ and $h^j = \phi^j(e_i + v_j)$, so $g^jh^j \sim \lambda_y((m-e_j)+v_j)f$. Then there is a unique coloured-graph morphism $\lambda^j: E_{2, m-e_j+e_i} \rightarrow E$ such that $\lambda^j|_{E_{2, m-e_j}}= \lambda_y|_{E_{2, m-e_j}}$ and $\lambda^j((m-e_j)+v_i)=g^j$.

\vspace{10pt}

Fix a path $z^j$ which traverses $\lambda_y|_{E_{2,m-e_j}}$. Observe that $|z^jg^j|=n$, so the inductive hypothesis gives a unique $\mathcal{C}$-compatible coloured-graph morphism $\lambda^j: E_{2, m-e_j+e_i} \rightarrow E$ such that $z^jg^j$ traverses $\lambda^j$. So then $z^j$ traverses $\lambda^j|_{E_{2,m-e_j}}$. By the uniqueness of the $\mathcal{C}$-compatible coloured-graph morphism $\lambda^j$, we have $\lambda^j|_{E_{2, m-e_j}}= \lambda_y|_{E_{2, m-e_j}}$ proving Claim 1.

\vspace{10pt}

\begin{fig}{Image of $E_{2, m+e_i}$ for $i=1$}
\[
\begin{tikzpicture}[>=stealth,xscale=2, yscale=2]

\node[inner sep = 0.8pt, label=below:{$(0,0)$}] (00) at (0,0) {$\scriptstyle\bullet$};
\node[inner sep = 0.8pt, label=below:{$(1,0)$}] (10) at (1,0) {$\scriptstyle\bullet$};
\node[inner sep = 0.8pt, label=below:{$(0,1)$}] (01) at (0,1) {$\scriptstyle\bullet$};
\node[inner sep = 0.8pt, label=below:{$(2,0)$}] (20) at (2,0) {$\scriptstyle\bullet$};
\node[inner sep = 0.8pt, label=below:{$(0,2)$}] (02) at (0,2) {$\scriptstyle\bullet$};
\node[inner sep = 0.8pt, label=below:{$(1,1)$}] (11) at (1,1) {$\scriptstyle\bullet$};
\node[inner sep = 0.8pt] (03) at (0,3) {$\scriptstyle\bullet$};
\node[inner sep = 0.8pt] (30) at (3,0) {$\scriptstyle\bullet$};
\node[inner sep = 0.8pt] (13) at (1,3) {$\cdots$};
\node[inner sep = 0.8pt] (31) at (3,1) {$\vdots$};
\node[inner sep = 0.8pt] (12) at (1,2) {$\cdots$};
\node[inner sep = 0.8pt] (21) at (2,1) {$\vdots$};
\node[inner sep = 0.8pt] (02.5) at (0,2.5) {$\vdots$};
\node[inner sep = 0.8pt] (2.50) at (2.5,0) {$\cdots$};

\node[inner sep = 0.8pt, label=below:{$m+e_1$}] (33) at (3,3) {$\scriptstyle\bullet$};
\node[inner sep = 0.8pt, label=below:{$m$}] (23) at (2,3) {$\scriptstyle\bullet$};
\node[inner sep = 0.8pt, label=below:{$m +e_1-e_2$}] (32) at (3,2) {$\scriptstyle\bullet$};
\node[inner sep = 0.8pt, label=below:{$m -e_2$}] (22) at (2,2) {$\scriptstyle\bullet$};

\draw[blue, <-] (00) to node[auto,black]{$$} (10) ;
\draw[red, <-, dashed] (00) to node[auto,black]{$$} (01) ;
\draw[blue, <-] (10) to node[auto,black]{$$} (20) ;
\draw[red, <-, dashed] (01) to node[auto,black]{$$} (02) ;
\draw[blue, <-] (01) to node[auto,black]{$$} (11) ;
\draw[red, <-, dashed] (10) to node[auto,black]{$$} (11) ;

\draw[blue, <-] (23) to node[auto,black]{$$} (33) ;
\draw[red, <-, dashed] (22) to node[auto,black]{$$} (23) ;
\draw[blue, <-] (22) to node[auto,black]{$$} (32) ;
\draw[red, <-, dashed] (32) to node[auto,black]{$$} (33) ;

\end{tikzpicture}
\]
\end{fig}
\begin{fig}{Image of $\phi^j$ for $j=2$}
\[
\begin{tikzpicture}[>=stealth,xscale=2, yscale=2]
\node[inner sep = 0.8pt, label=left:{$r(\lambda_y((m-e_2)+v_2))=r(g^2)$}] (00) at (0,0) {$\scriptstyle\bullet$};
\node[inner sep = 0.8pt, label=right:{$r(h^2)=s(g^2)$}] (10) at (1,0) {$\scriptstyle\bullet$};
\node[inner sep = 0.8pt, label=left:{$s(\lambda_y((m-e_2)+v_2))=r(f)$}] (01) at (0,1) {$\scriptstyle\bullet$};
\node[inner sep = 0.8pt, label=right:{$s(f)=s(h^2)$}] (11) at (1,1) {$\scriptstyle\bullet$};

\draw[blue, <-] (00) to node[auto,black, swap]{$g^2$} (10) ;
\draw[red, <-, dashed] (00) to node[auto,black]{$\lambda_y((m-e_2)+v_2)$} (01) ;
\draw[blue, <-] (01) to node[auto,black]{$f$} (11) ;
\draw[red, <-, dashed] (10) to node[auto,black,swap]{$h^2$} (11) ;

\end{tikzpicture}
\]
\end{fig}
\vspace{10pt}
Define the function $\lambda_x: E_{2, m+e_i} \rightarrow E$ via the formula
\begin{align*}
\lambda_x|_{E_{2,m}}=\lambda_y, \lambda_x|_{E_{2, m+e_i-e_j}}=\lambda^j \text{ and } \lambda_x|_{E_{2, [m-e_j, m+e_i] }}= \phi^j
\end{align*}
This $\lambda_x$ is well defined on its domain from Claim 1. Since $\phi^j$ is a square in $\mathcal{C}$, with $\lambda_y$ and $\lambda^j$ $\mathcal{C}$-compatible, then $\lambda_x$ is $\mathcal{C}$-compatible, and $x=yf$ traverses $\lambda_x$.

\vspace{10pt}

For uniqueness, fix a $\mathcal{C}$-compatible coloured-graph morphism $\mu$ traversed by $x$. 
We wish to show that $\mu= \lambda_x$, we will show this piece by piece.
Since $x=yf$ traverses $\mu$, $y$ traverses $\mu|_{E_{2,m}}$. The inductive hypothesis says that 
\begin{align*}
\mu|_{E_{2,m}}= \lambda_y =\lambda_x|_{E_{2,m}}.
\end{align*}
Now, $z^j$ is a traversal of $\lambda_y|_{E_{2,m-e_j}}= \lambda_x|_{E_{2,m-e_j}}=\mu|_{E_{2,m-e_j}}$. Then $z^j \lambda_y(m-e_j + v_j)$ traverses $\lambda_y$. So $z^j \lambda_y((m-e_j )+ v_j)f$ traverses $\mu$. Now $\phi^j$ is the unique square in $\mathcal{C}$ traversed by $\lambda_y((m-e_j )+ v_j)f$. So
\begin{align*}
\mu|_{2, [m-e_j, m+e_1]}^* = \phi^j = \lambda_x|_{2, [m-e_j, m+e_1]}^*
\end{align*}
Now $z^jg^j$ traverses $\mu|_{E_{2,m+e_i -e_j}}$ so the inductive hypothesis forces
\begin{align*}
\mu|_{E_{2,m+e_i -e_j}} = \lambda^j = \lambda_x|_{E_{2,m+e_i -e_j}} 
\end{align*}
Hence $\mu=\lambda_x$, completing the proof for the second case.
\end{proof}

\begin{nota}  Fix a 2-coloured graph $E$ and a complete collection of squares $\mathcal{C}$ in $E$. For each $m \in \mathbb{N}^2$, we write $\Lambda^m$ for the set of all $\mathcal{C}$-compatible coloured-graph morphisms $\lambda: E_{2,m}\rightarrow E$. For $v \in E^0$ we define $\lambda_v: E_{2,0} \rightarrow E$ by $\lambda_v(0)=v$. Let 
\begin{align*}
\Lambda = \cup_{m\in  \mathbb{N}^2} \Lambda^m. 
\end{align*}
We have previously defined $d: \Lambda \rightarrow \mathbb{N}^2$ and $r,s: \Lambda \rightarrow \Lambda^0$.
\end{nota}

\begin{thm}If $\mu: E_{2,m} \rightarrow E$ and $\nu: E_{2,n} \rightarrow E$ are $\mathcal{C}$-compatible coloured-graph morphisms such that $s(\mu)=r(\nu)$, then there exists a unique $\mathcal{C}$-compatible coloured-graph morphism $\mu\nu: E_{m+n} \rightarrow E$ such that $(\mu\nu)|_{E_{2,m}}=\mu$ and $(\mu\nu)|^*_{E_{2,[m, m+n]}}= \nu$. Under this composition map, the set $\Lambda$ is a category.
\end{thm}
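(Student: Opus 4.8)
\emph{Overview and existence.} The plan is to route everything through Theorem~\ref{trav}, which furnishes a dictionary between paths and $\mathcal{C}$-compatible coloured-graph morphisms: each $x\in E^*$ traverses a unique such morphism $\lambda_x$, and every morphism arises this way. I will realise the composite by concatenating traversing paths. Choose a path $x$ traversing $\mu$ and a path $y$ traversing $\nu$ (for instance by reading the edges of each morphism off along a fixed colour-ordered route). Since $s(x)=\mu(m)=s(\mu)=r(\nu)=\nu(0)=r(y)$, the concatenation $xy$ is a path in $E^*$ with $d^*(xy)=d^*(x)+d^*(y)=m+n$. I then define $\mu\nu:=\lambda_{xy}$, the unique $\mathcal{C}$-compatible morphism traversed by $xy$ supplied by Theorem~\ref{trav}; it has degree $m+n$ and is $\mathcal{C}$-compatible by construction.

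\emph{The two restrictions.} The first $|m|$ edges of $xy$ are exactly $x$ and stay inside the sub-box $E_{2,m}=E_{2,[0,m]}$, so $x$ traverses $(\mu\nu)|_{E_{2,m}}$; as $x$ also traverses $\mu$ and both morphisms are $\mathcal{C}$-compatible, the uniqueness clause of Theorem~\ref{trav} gives $(\mu\nu)|_{E_{2,m}}=\mu$. The remaining $|n|$ edges are $y$, and additivity of the degree map places the generators they activate at the translated positions $m+q(c(y_1\cdots y_{l-1}))$, so $y$ traverses $(\mu\nu)|^*_{E_{2,[m,m+n]}}$; the same uniqueness yields $(\mu\nu)|^*_{E_{2,[m,m+n]}}=\nu$.

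\emph{Uniqueness and well-definedness.} Conversely, let $\lambda:E_{2,m+n}\to E$ be any $\mathcal{C}$-compatible morphism with $\lambda|_{E_{2,m}}=\mu$ and $\lambda|^*_{E_{2,[m,m+n]}}=\nu$. Checking the traversal condition directly, $xy$ traverses $\lambda$: for $l\le|m|$ the relevant generator lies in $E_{2,m}$, where $\lambda$ agrees with $\mu$ and $x$ traverses $\mu$; for $l>|m|$ the same degree bookkeeping puts the generator in the translated box $E_{2,[m,m+n]}$, where $\lambda$ agrees with $\nu$ and $y$ traverses $\nu$. Theorem~\ref{trav} then forces $\lambda=\lambda_{xy}=\mu\nu$. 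Since these two defining properties never mention $x$ or $y$, any other traversals $x',y'$ produce a morphism with the same restrictions, so $\lambda_{x'y'}=\mu\nu$; hence $\mu\nu$ is well-defined.

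\emph{Category axioms.} With objects $\Lambda^0=\{\lambda_v:v\in E^0\}$ and the partial composition above, I verify Definition~\ref{category}. The range/source axiom is immediate from the restriction formulae: $r(\mu\nu)=(\mu\nu)(0)=\mu(0)=r(\mu)$ and $s(\mu\nu)=(\mu\nu)(m+n)=\nu(n)=s(\nu)$. For identities, $d(\lambda_v)=0$ collapses one restriction to a single point, giving $\lambda_{r(\mu)}\mu=\mu=\mu\lambda_{s(\mu)}$. Associativity is the substantive axiom and is where the path picture pays off: taking $x,y,z$ traversing $\mu,\nu,\rho$, one has $\mu\nu=\lambda_{xy}$ and $\nu\rho=\lambda_{yz}$, whence $(\mu\nu)\rho=\lambda_{(xy)z}$ and $\mu(\nu\rho)=\lambda_{x(yz)}$; since concatenation of paths is associative, $(xy)z=x(yz)$ and the two composites coincide. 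I expect the main obstacle to be the degree-additivity bookkeeping of the middle two paragraphs—certifying that $xy$ traverses precisely the morphism carrying the prescribed corner restrictions—after which the identity and associativity axioms follow almost formally from Theorem~\ref{trav}.
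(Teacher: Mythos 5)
Your proposal is correct and follows essentially the same route as the paper: both define $\mu\nu$ as the unique $\mathcal{C}$-compatible morphism traversed by the concatenation $xy$ of chosen traversals, extract the two restriction identities and uniqueness from the uniqueness clause of Theorem~\ref{trav}, and verify the category axioms by reducing associativity to associativity of path concatenation. The only difference is cosmetic: you check the traversal bookkeeping for $xy$ by hand (and make well-definedness in the choice of $x,y$ explicit), where the paper delegates that step to its concatenation lemma.
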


\begin{proof}
Fix $x,y \in E^*$ such that $x$ traverses $\mu$ and $y$ traverses $\nu$. Lemma~{lem2} implies that $xy$ traverses a unique $\mathcal{C}$-compatible coloured-graph morphism $\mu\nu$. Then $x$ traverses $(\mu\nu)|_{E_{2,m}}$, and $y$ traverses $(\mu\nu)|_{E_{2,[m,m+n]}}$, so the uniqueness of the  coloured-graph morphism in Theorem - gives $(\mu\nu)|_{E_{2,m}}=\mu$ and $(\mu\nu)|_{E_{2,[m,m+n]}}=\nu$. 
That same uniqueness property forces any other coloured-graph morphism $\lambda$ with $\lambda|_{E_{2,m}}=\mu$ and $\lambda|_{E_{2,[m,m+n]}}=\nu$ to equal $\mu\nu$, because $xy$ traverse both $\lambda$ and $\mu\nu$.

\vspace{10pt}

It remains to show that $\Lambda$ is a category. For composable $\mu$, $\nu$ we have
\begin{align*}
s(\mu\nu)=(\mu\nu)(d(\mu\nu))=(\mu\nu)|^*_{E_{2,[d(\mu), d(\mu) + d(\nu)]}}(d(\nu))= \nu(d(\nu))=s(\nu)
\end{align*}
Similarly, $r(\mu\nu) = r(\mu)$. So condition (i) in Definition~\ref{category} holds.

\vspace{10pt}

For condition (ii) in Definition~\ref{category} of associativity, let $\lambda: E_{2,l} \rightarrow E$, $\mu: E_{2,m} \rightarrow E$ and $\nu: E_{2,n} \rightarrow E$ be $\mathcal{C}$-compatible coloured-graph morphisms such that $s(\lambda)=r(\mu)$ and $s(\mu)=r(\nu)$. Fix $x_{\lambda}$, $x_{\mu}$, $x_{\nu} \in E^*$ such that $x_{\lambda}$ traverses $\lambda$, $x_{\mu}$ traverses $\mu$ and $x_{\nu}$ traverses $\nu$.
Then $x_{\lambda}x_{\mu}$ traverses $\lambda\mu$. Hence $x_{\lambda}x_{\mu}x_{\nu}= (x_{\lambda}x_{\mu})x_{\nu}$ traverses $(\lambda\mu)\nu$. Similarly, $x_{\lambda}x_{\mu}x_{\nu}= x_{\lambda}(x_{\mu}x_{\nu})$ traverses $\lambda(\mu\nu)$. So Theorem~\ref{trav} implies that $(\lambda\mu)\nu=\lambda(\mu\nu)$

\vspace{10pt}

Now for condition (iii) in Definition~\ref{category}, for $v\in E^0$, we have $r(\lambda_{v})= \lambda_{v}(0)=v$ and $s(\lambda_{v})= \lambda_{v}(d(\lambda_{v}))= \lambda_{v}(0)= v$ by definition. Further, if $r(\mu) = v = s(\lambda_{v})$, then $\lambda_{v}\mu= \mu$ and if $s(\nu)= v = r(\lambda_{v})$, then $\nu =\lambda_v\ \nu$.
\end{proof}

\section{Generalisation of a directed graph in the Baumslag-Solitar semigroup}

\begin{defi}
For each pair of non-zero integers $m$ and $n$, there is a corresponding group
\begin{align*}
BS(m,n) = \langle a,b \mid ab^m = b^na \rangle 
\end{align*}
\end{defi}

Denote $BS(m,n)^+$ as the semigroup of all reduced elements in $BS(m,n)$ that contain no inverses $a^{-1}$ or $b^{-1}$. We will only be dealing with the semigroup $BS(2,1)^+$ for the remainder of this report, hence the semigroup relation is $ab^2=ba$.
\vspace{7pt}

Fix a 2-coloured graph $E$. We have the colour map $c: E^1\rightarrow \{c_a,c_b\}$. Let $c_a$:=\emph{red} and $c_b$:=\emph{blue}. Define $q:\{c_a,c_b\}\rightarrow BS(2,1)^+$ with $q(c_l)=l$ for $l\in \{a,b\}$. Hence the degree of a path $x \in E^*$ is now $d^*(x)= q(c(x)) \in BS(2,1)^+$.

\begin{defi}
For $w \in BS(2,1)^+$, we define the 2-coloured graph $E_{w}$ by
\begin{align*}
E_w^0 &= \{ z \in BS(2,1)^+ : z \le w\},
&E_w^1 &= \{(z, z a): z, za \in E_w^0\} \bigcup \{(z, z b): z, zb \in E_w^0\} \\
&&&= \{(z, z l): z, z l \in E_w^0\text{ and } l = a,b\} 
\end{align*}
With the maps $r(z, z l) = z$, $s(z, zl) = z l$ and $c(z, z l) = c_l$.
\end{defi}

\begin{fig}{The 2- coloured graph $E_{ba}$}
\[
\begin{tikzpicture}[>=stealth,xscale=4, yscale=4]

\node[inner sep = 0.8pt, label=below:{$e$}] (00) at (0,0) {$\scriptstyle\bullet$};
\node[inner sep = 0.8pt, label=below:{$b$}] (10) at (1,0) {$\scriptstyle\bullet$};
\node[inner sep = 0.8pt, label=above:{$a$}] (01) at (0,1) {$\scriptstyle\bullet$};
\node[inner sep = 0.8pt, label=above:{$ab$}] (051) at (0.5,1) {$\scriptstyle\bullet$};
\node[inner sep = 0.8pt, label=above:{$ab^2=ba$}] (11) at (1,1) {$\scriptstyle\bullet$};

\draw[blue, <-] (00) to node[auto,black]{$(e, eb)$} (10) ;
\draw[blue, <-] (01) to node[auto,black, swap]{$(a, ab)$} (051) ;
\draw[blue, <-] (051) to node[auto,black, swap]{$(ab, ab^2)$} (11) ;

\draw[red, <-, dashed] (00) to node[auto,black]{$(e,a)$} (01) ;
\draw[red, <-, dashed] (10) to node[auto,black, swap]{$(b, ba)$} (11) ;

\end{tikzpicture}
\]
\end{fig}

\begin{defi}\label{degree}
For a given coloured-graph morphism $\lambda: E_{w} \rightarrow E$ we say $\lambda$ has degree $w\in BS(2,1)^+$ and write $d(\lambda)=w$. Define $r(\lambda):= \lambda(e)$ and $s(\lambda):=\lambda(w)$. For $v\in E^0$ we define $\lambda_v: E_e \rightarrow E$ by $\lambda_v(e)=v$. 
\end{defi}

Let $E$ be a 2 coloured-graph. A collection of squares $\mathcal{C}$ in $E$ is \emph{complete} if:
\begin{itemize}
\item[1. ] for each $x\in E^*$ with $c(x) = c_ac_bc_b$, there exists a unique $\varphi \in \mathcal{C}$ such that $x= \varphi(e, a)\varphi(a, ab)\varphi(ab, ab^2)$; and\\
\item[2. ] for each $x\in E^*$ with $c(x)= c_bc_a$, there exists a unique $\varphi \in \mathcal{C}$ such that $x = \varphi(e, b)\varphi(b,ba)$.
\end{itemize}
\vspace{7pt}

If $\lambda: E_w \rightarrow E$ is a coloured-graph morphism and $\varphi: E_{ba} \rightarrow E$ is a square in $E$, then $\varphi$ \emph{occurs in }$\lambda$ if :
\begin{itemize}
\item[1. ]  there exists $m\in BS(2,1)^+$ such that $\lambda (zm)= \varphi (z)$ for all $z \in E^0_{ba}$, and,
\item[2. ] $\lambda (zm, z ml)= \varphi (z, z l)$ for all $(z, zl) \in E^1_{ba}$
\end{itemize}
A coloured-graph morphism $\lambda: E_w \rightarrow E$ is $\mathcal{C}$-\emph{compatible} if every square occurring in $\lambda$ belongs to the collection $\mathcal{C}$. 
\vspace{7pt}

For a path $x\in E^*$ and  coloured-graph morphism $\lambda: E_w \rightarrow E$, we say $x$ \emph{traverses} $\lambda$ if $d(\lambda)=d^*(x)$ and if $\lambda(q(c(x_1x_2...x_{n-1})), q(c(x_1x_2...x_{n-1}))q(c(x_n))) = x_n$ for all $0<n\le |x|$. 

For $w_1,w_2 \in BS(2,1)^+$ with $w_2=w_1w'$ for $w' \in BS(2,1)^+$, define $E_{[w_1,w_2]}$ to be the subgraph of $E_{w_2}$ such that 
\begin{align*}
E_{[w_1,w_2]}^0=& \{w\in BS(2,1)^+: w_1 \text{ is a subword of } w , w\text{ is a subword of }  w_2 \} \\
E_{[w_1,w_2]}^*=&\{ x\in E^1_{w_2}: s(x),r(x) \in E_{[w_1,w_2]}^0 \}
\end{align*}
Given a coloured-graph morphism $\lambda: E_w\rightarrow E$ and $w_1,w_2 \in BS(2,1)^+$ such that $w_2=w_1w'$ and $w=w_2w''$ for $w',w'' \in BS(2,1)^+$ define $\lambda|^*_{[w_1,w_2]}: E_{w_1^{-1}w_2} \rightarrow E$ by
\begin{align*}
\lambda|^*_{[w_1,w_2]}(a)= \lambda(w_1a)
\end{align*}

\begin{ex}
Let $E$ be the 2-coloured graph as in Figure~\ref{E}.
\begin{fig}\label{E}
\[
\begin{tikzpicture}[>=stealth,xscale=2, yscale=2]

\node[inner sep = 0.8pt, label=above:{$u$}] (u) at (0,0) {$\scriptstyle\bullet$}
    edge[->,blue,in=-135,out=135,looseness=25] node[auto,swap,black] {$g$} (u); 
   \node[inner sep = 0.8pt, label=above:{$v$}] (v) at (1,0) {$\scriptstyle\bullet$}
    edge[<-,red, dashed,in=-60,out=-120] node[auto,black] {$h$} (u)
    edge[->,red, dashed,in=60,out=120] node[auto,swap,black] {$f$} (u)
    edge[<-,blue,in=-45,out=45,looseness=25] node[auto,black] {$k$} (v);

\end{tikzpicture}
\]
\end{fig}

Squares in $E$ are $\varphi_1$ and $\varphi_2$ in Figure~\ref{squares}.

\begin{fig}\label{squares}
\[
\begin{tikzpicture}[>=stealth,xscale=3, yscale=3]
\node at (0.5,0.5) {$\varphi_1$};
\node at (2.5,0.5) {$\varphi_2$};
\node[inner sep = 0.8pt, label=below:{$u$}] (00) at (0,0) {$\scriptstyle\bullet$};
\node[inner sep = 0.8pt, label=below:{$u$}] (10) at (1,0) {$\scriptstyle\bullet$};
\node[inner sep = 0.8pt, label=above:{$v$}] (01) at (0,1) {$\scriptstyle\bullet$};
\node[inner sep = 0.8pt, label=above:{$v$}] (051) at (0.5,1) {$\scriptstyle\bullet$};
\node[inner sep = 0.8pt, label=above:{$v$}] (11) at (1,1) {$\scriptstyle\bullet$};
\draw[blue, <-] (00) to node[auto,black,swap]{$g$} (10) ;
\draw[blue, <-] (01) to node[pos=0.5,anchor=south,black]{$k$} (051) ;
\draw[blue, <-] (051) to node[pos=0.5,anchor=south,black]{$k$} (11) ;
\draw[red, <-, dashed] (00) to node[auto,black]{$f$} (01) ;
\draw[red, <-, dashed] (10) to node[auto,black,swap]{$f$} (11) ;

\node[inner sep = 0.8pt, label=below:{$v$}] (20) at (2,0) {$\scriptstyle\bullet$};
\node[inner sep = 0.8pt, label=below:{$v$}] (30) at (3,0) {$\scriptstyle\bullet$};
\node[inner sep = 0.8pt, label=above:{$u$}] (21) at (2,1) {$\scriptstyle\bullet$};
\node[inner sep = 0.8pt, label=above:{$u$}] (251) at (2.5,1) {$\scriptstyle\bullet$};
\node[inner sep = 0.8pt, label=above:{$u$}] (31) at (3,1) {$\scriptstyle\bullet$};
\draw[blue, <-] (20) to node[auto,black,swap]{$k$} (30) ;
\draw[blue, <-] (21) to node[pos=0.5,anchor=south,black]{$g$} (251) ;
\draw[blue, <-] (251) to node[pos=0.5,anchor=south,black]{$g$} (31) ;
\draw[red, <-, dashed] (20) to node[auto,black]{$h$} (21) ;
\draw[red, <-, dashed] (30) to node[auto,black,swap]{$h$} (31) ;
\end{tikzpicture}
\]
\end{fig}

We can see that $E$ has two blue-blue-red paths, and they are $kkf$ and $ggh$, represented only once in the collection $\mathcal{C}=\{ \varphi_1, \varphi_2\}$, as are the red-blue paths $hk$ and $fg$ in $E$. So $ \{ \varphi_1, \varphi_2\}$ form a complete collection.

Let $\lambda:E_{b^2a^2}\rightarrow E$ be defined as in Figure~\ref{morph}
\begin{fig}\label{morph}
\[
\begin{tikzpicture}[>=stealth,xscale=2, yscale=2]
\node at (1,1) {$\varphi_1$};
\node at (3,1) {$\varphi_1$};
\node at (0.5,2.5) {$\varphi_2$};
\node at (1.5,2.5) {$\varphi_2$};
\node at (2.5,2.5) {$\varphi_2$};
\node at (3.5,2.5) {$\varphi_2$};

\node[inner sep = 0.8pt, label=below:{$u$}] (00) at (0,0) {$\scriptstyle\bullet$};
\node[inner sep = 0.8pt, label=below:{$u$}] (20) at (2,0) {$\scriptstyle\bullet$};
\node[inner sep = 0.8pt, label=below:{$u$}] (40) at (4,0) {$\scriptstyle\bullet$};
\node[inner sep = 0.8pt, label=left:{$v$}] (02) at (0,2) {$\scriptstyle\bullet$};
\node[inner sep = 0.8pt, label=below:{$v$}] (12) at (1,2) {$\scriptstyle\bullet$};
\node[inner sep = 0.8pt, label=below:{$v$}] (22) at (2,2) {$\scriptstyle\bullet$};
\node[inner sep = 0.8pt, label=below:{$v$}] (32) at (3,2) {$\scriptstyle\bullet$};
\node[inner sep = 0.8pt, label=right:{$v$}] (42) at (4,2) {$\scriptstyle\bullet$};
\node[inner sep = 0.8pt, label=left:{$u$}] (03) at (0,3) {$\scriptstyle\bullet$};
\node[inner sep = 0.8pt, label=above:{$u$}] (053) at (0.5,3) {$\scriptstyle\bullet$};
\node[inner sep = 0.8pt, label=above:{$u$}] (13) at (1,3) {$\scriptstyle\bullet$};
\node[inner sep = 0.8pt, label=above:{$u$}] (153) at (1.5,3) {$\scriptstyle\bullet$};
\node[inner sep = 0.8pt, label=above:{$u$}] (23) at (2,3) {$\scriptstyle\bullet$};
\node[inner sep = 0.8pt, label=above:{$u$}] (253) at (2.5,3) {$\scriptstyle\bullet$};
\node[inner sep = 0.8pt, label=above:{$u$}] (33) at (3,3) {$\scriptstyle\bullet$};
\node[inner sep = 0.8pt, label=above:{$u$}] (353) at (3.5,3) {$\scriptstyle\bullet$};
\node[inner sep = 0.8pt, label=right:{$u$}] (43) at (4,3) {$\scriptstyle\bullet$};

\draw[blue, <-] (00) to node[auto,black, swap]{$g$} (20) ;
\draw[blue, <-] (20) to node[auto,black, swap]{$g$} (40) ;
\draw[blue, <-] (02) to node[auto,black, swap]{$k$} (12) ;
\draw[blue, <-] (12) to node[auto,black, swap]{$k$} (22) ;
\draw[blue, <-] (22) to node[auto,black, swap]{$k$} (32) ;
\draw[blue, <-] (32) to node[auto,black, swap]{$k$} (42) ;

\draw[blue, <-] (03) to node[auto,black]{$g$} (053) ;
\draw[blue, <-] (053) to node[auto,black]{$g$} (13) ;
\draw[blue, <-] (13) to node[auto,black]{$g$} (153) ;
\draw[blue, <-] (153) to node[auto,black]{$g$} (23) ;
\draw[blue, <-] (23) to node[auto,black]{$g$} (253) ;
\draw[blue, <-] (253) to node[auto,black]{$g$} (33) ;
\draw[blue, <-] (33) to node[auto,black]{$g$} (353) ;
\draw[blue, <-] (353) to node[auto,black]{$g$} (43) ;

\draw[red, <-, dashed] (00) to node[auto,black]{$f$} (02) ;
\draw[red, <-, dashed] (20) to node[auto,black]{$f$} (22) ;
\draw[red, <-, dashed] (40) to node[auto,swap,black]{$f$} (42) ;
\draw[red, <-, dashed] (02) to node[auto,black]{$h$} (03) ;
\draw[red, <-, dashed] (12) to node[auto,black]{$h$} (13) ;
\draw[red, <-, dashed] (22) to node[auto,black]{$h$} (23) ;
\draw[red, <-, dashed] (32) to node[auto,black]{$h$} (33) ;
\draw[red, <-, dashed] (42) to node[auto,black,swap]{$h$} (43) ;

\end{tikzpicture}
\]
\end{fig}

So $\varphi_1$ occurs in $\lambda$ because $\lambda^*_{[b,b^2a]}=\varphi_1$. In fact, every square occurring in $\lambda$ is either $\varphi_1$ or $\varphi_2$, so $\lambda$ is $\mathcal{C}-$compatible. Also, for example, the path $ggfh\in E^*$ traverses $\lambda$.
\end{ex}

\begin{rem} Every $w \in BS(2,1)^+$ can be expressed as
\begin{align*}
b^{m_1}a^{n_1}ba^{n_2}ba^{n_3}b...ba^{n_0}  \text{ and } a^nb^m
\end{align*}
where $n_i \not= 0$ $ \forall i\not= 0$.
\end{rem}

\begin{defi}
Let $E$ be a 2-coloured graph, and let $\lambda: E_w \rightarrow E$ be a coloured-graph morphism, with $x \in E^*$ a traversal of $\lambda$. Define $\lfloor x \rfloor \in E^*$ to be a traversal of $\lambda$ such that $|x|$ is minimised. Expressing $w=b^ma^{n_1}ba^{n_2}ba^{n_3}b...ba^{n_0}$, then the shortest path traversing $\lambda$  is of the form:
\begin{align*}
\lfloor x \rfloor =& \lambda(e,b)\lambda(b,b^2)...\lambda(b^{m-1},b^m) \text{ or }\\
\lfloor x \rfloor =& \lambda(e,a)\lambda(a,a^2)...\lambda(a^{n_1-1}, a^{n_1})\lambda(a^{n_1}, a^{n_1}b)\lambda(a^{n_1}b, a^{n_1}ba)...\lambda(a^{n_1}ba^{n_2-1}, a^{n_1}ba^{n_2})\\
&\lambda(a^{n_1}ba^{n_2-1}, a^{n_1}ba^{n_2}b)...\lambda(b^ma^{n_1}ba^{n_2}ba^{n_3}b...a^{n_k}, b^ma^{n_1}ba^{n_2}ba^{n_3}b...a^{n_k}b)  \text{ or }\\
\lfloor x \rfloor =& \lambda(e,a)\lambda(a,a^2)...\lambda(a^{n_1-1}, a^{n_1})\lambda(a^{n_1}, a^{n_1}b)\lambda(a^{n_1}b, a^{n_1}ba)...\lambda(a^{n_1}ba^{n_2-1}, a^{n_1}ba^{n_2})\\
&\lambda(a^{n_1}ba^{n_2-1}, a^{n_1}ba^{n_2}b)...\lambda(b^ma^{n_1}ba^{n_2}ba^{n_3}b...a^{n_k}ba^{n_0-1}  , b^ma^{n_1}ba^{n_2}ba^{n_3}b...a^{n_k}ba^{n_0} )
\end{align*}
Define $\lceil x \rceil \in E^*$ to be a traversal of $\lambda$ such that $|x|$ is maximised. Expressing $w=a^nb^m$, we have the longest path traversing $\lambda$ is:
\begin{align*}
\lceil x \rceil =& \lambda(e,b)\lambda(b,b^2)...\lambda(b^{m-1},b^m) \text{ or }\\
\lceil x \rceil=&\lambda(e,a)\lambda(a,a^2)...\lambda(a^{n-1},a^n) \text{ or }\\
\lceil x \rceil=&\lambda(e,a)\lambda(a,a^2)...\lambda(a^n,a^nb)...\lambda(b^{m-1}, b^m)
\end{align*}
\end{defi}

This implies that every coloured-graph morphism has a traversal.
\begin{ex}
Let $\lambda: E_{b^2a^2} \rightarrow E$ be a coloured-graph morphism. Let $x \in E^*$ traverse $\lambda$. 

Then the shortest path traversing $\lambda$ is $\lfloor x \rfloor$ with 
\begin{align*}
d^*(\lfloor x \rfloor)&= q(c(\lfloor x \rfloor)) \\
&= q(c_bc_bc_ac_a) \\
&= b^2a^2
\end{align*}

The longest path traversing $\lambda$ is $\lceil x \rceil$ with 
\begin{align*}
d^*(\lceil x \rceil)&= q(c(\lceil x \rceil)) \\
&= q(c_ac_ac_bc_bc_bc_bc_bc_bc_bc_b) \\
&= a^2b^8
\end{align*}
\end{ex}
Note that $b^2a^2 = a^2b^8 \in BS(2,1)^+$, this illustrates how paths of the same degree can differ in length.

\begin{lem}\label{lem2}
Let $E$ be a 2 coloured-graph and let $\lambda:E_w \rightarrow E$ be a coloured-graph morphism where $w\in BS(2,1)^+$. Write $w=w_1w_2$ where $w_1,w_2 \in BS(2,1)^+$. If $x\in E^*$ traverses $\lambda|_{E_{[e,w_1]}}$ and $y\in E^*$ traverses $\lambda|^*_{E_{[w_1, w_1w_2]}}$, then $xy$ traverses $\lambda$.
\end{lem}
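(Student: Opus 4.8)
The plan is to verify directly the two conditions in the definition of \emph{traverses} for the concatenated path $xy$ against $\lambda$: the degree condition $d(\lambda) = d^*(xy)$, and the edge condition $\lambda(q(c((xy)_1\cdots(xy)_{n-1})), q(c((xy)_1\cdots(xy)_{n-1}))q(c((xy)_n))) = (xy)_n$ for every $0 < n \le |xy|$. The whole argument is a case split on where the $n$-th edge of $xy$ lives, together with the observation that $q$ carries concatenation of colours to multiplication in $BS(2,1)^+$.

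First I would dispatch the degree condition. Since $x$ traverses $\lambda|_{E_{[e,w_1]}}$, whose domain $E_{e^{-1}w_1}=E_{w_1}$ has degree $w_1$, we get $d^*(x)=q(c(x))=w_1$; likewise $\lambda|^*_{E_{[w_1,w_1w_2]}}$ has domain $E_{w_1^{-1}(w_1w_2)}=E_{w_2}$, so $d^*(y)=q(c(y))=w_2$. Because the colour word of a concatenation is the concatenation of the colour words and $q$ is multiplicative, $d^*(xy)=q(c(x)c(y))=q(c(x))\,q(c(y))=w_1w_2=w=d(\lambda)$.

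Next I would prove the edge condition by splitting the index according to whether the $n$-th edge of $xy$ comes from $x$ or from $y$; write $p=|x|$. For $1\le n\le p$ the relevant edge is $x_n$ and the partial product $q(c((xy)_1\cdots(xy)_{n-1}))=q(c(x_1\cdots x_{n-1}))$ is an element $\le w_1$, hence a vertex of $E_{[e,w_1]}$; since the unstarred restriction satisfies $\lambda|_{E_{[e,w_1]}}(z,zl)=\lambda(z,zl)$ on this subgraph, the traversal condition satisfied by $x$ transfers verbatim to $xy$. For $p<n\le p+|y|$, write $n=p+k$ with $1\le k\le|y|$; then the $n$-th edge is $y_k$ and, by multiplicativity of $q$, the partial product equals $w_1\,q(c(y_1\cdots y_{k-1}))$. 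Setting $z_{k-1}=q(c(y_1\cdots y_{k-1}))$, the defining formula $\lambda|^*_{E_{[w_1,w_1w_2]}}(z,zl)=\lambda(w_1z,w_1zl)$ converts the traversal condition for $y$, namely $\lambda|^*_{E_{[w_1,w_1w_2]}}(z_{k-1},z_{k-1}q(c(y_k)))=y_k$, into exactly $\lambda(w_1z_{k-1},w_1z_{k-1}q(c(y_k)))=y_k$, which is the required identity. Combining the two ranges gives the edge condition for all $n$, so $xy$ traverses $\lambda$.

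The step I expect to need the most care is the bookkeeping with the starred restriction and the translation by $w_1$, since $BS(2,1)^+$ is noncommutative and its elements admit several word representations (for instance $ab^2=ba$). I would therefore treat every partial product as an element of the semigroup rather than as a word, and confirm that each intermediate position genuinely lies in the correct subgraph $E_{[e,w_1]}^0$ or $E_{[w_1,w_1w_2]}^0$, so that the two restrictions of $\lambda$ really do agree with $\lambda$ (after the appropriate translation) on precisely the edges used by $x$ and by $y$.
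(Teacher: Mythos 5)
Your proposal is correct and follows essentially the same route as the paper's proof: verify the degree condition via multiplicativity of $q$, then check the traversal condition by splitting the edge index at $|x|$ and unwinding the definitions of the plain and starred restrictions. Your added care about treating partial products as semigroup elements (rather than words) is a sensible refinement of the same argument, not a different approach.
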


\begin{proof}
We have $d^*(\lambda)=w = w_1w_2 = q(c(x))q(c(y))= q(c(xy))$ and for $m \le |xy|$, we have
\begin{align*}
&\lambda(d^*((xy)_1...(xy)_{m-1}), d^*((xy)_1...(xy)_{m-1})q(c(xy_m)))\\
&= \begin{cases} 
\lambda|_{E_{[e,w_1]}}(d^*((xy)_1...(xy)_{m-1}), d^*((xy)_1...(xy)_{m-1})q(c(xy_m))) &\text{ if $m \le |x|$}\\
\lambda|^*_{E_{[w_1, w_1w_2]}}(d((xy)_1...(xy)_{m-|x|-1}), d((xy)_1...(xy)_{m-|x|-1})q(c(xy_{m-|x|}))) &\text{ otherwise} \end{cases}\\
&= \begin{cases} 
x_m &\text{ if $m \le |x|$}\\
y_{m-|x|} &\text{ otherwise} \end{cases}
\end{align*}
\end{proof}

\begin{thm}\label{thm1}
Let $E$ be a 2-coloured graph and let $\mathcal{C}$ be a complete collection of squares in $E$. For every $x \in E^*$ there is a unique $\mathcal{C}$-compatible coloured-graph morphism $\lambda_x: E_{d^*(x)} \rightarrow E$.
\end{thm}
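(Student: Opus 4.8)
The plan is to induct on the length $|x|$ of the path, following the template of Theorem~\ref{trav} but with the two completeness rules of Definition~\ref{degree}'s preamble replacing the single square-completion used in the $\mathbb{N}^k$ setting. For the base case $|x|=0$ we have $x\in E^0$, and the morphism $\lambda_x:E_e\to E$ with $\lambda_x(e)=x$ (Definition~\ref{degree}) is the unique morphism on $E_e$; no square occurs in it, so it is vacuously $\mathcal{C}$-compatible, and $x$ traverses it since $d(\lambda_x)=e=d^*(x)$.

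For the inductive step I would assume the result for all paths of length at most $n$, fix $x$ with $|x|=n+1$, and write $x=yf$ with $f\in E^1$ of colour $c_l$, $l\in\{a,b\}$, and $w:=d^*(y)$. The inductive hypothesis supplies a unique $\mathcal{C}$-compatible $\lambda_y:E_w\to E$ traversed by $y$, and since $w\le wl$ the graph $E_w$ is a subgraph of $E_{wl}=E_{d^*(x)}$, so the task is to extend $\lambda_y$ over $E_{d^*(x)}$ sending the new boundary edge to $f$. I would separate this according to whether appending $f$ creates a new occurrence of a blue--red ($c_bc_a$) or red--blue--blue ($c_ac_bc_b$) pattern. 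In the case where it does not—for instance merely lengthening an existing blue tail—$E_{d^*(x)}$ is obtained from $E_w$ by adjoining a single vertex and edge, so I would set $\lambda_x|_{E_w}=\lambda_y$, $\lambda_x(w,wl)=f$ and $\lambda_x(wl)=s(f)$, exactly as in Case~1 of Theorem~\ref{trav}; $\mathcal{C}$-compatibility is inherited from $\lambda_y$, and $x=yf$ traverses $\lambda_x$ by Lemma~\ref{lem2}.

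The substantive case is when appending $f$ forces square completion, and here the relation $ab^2=ba$ is the key complication: whereas in $\mathbb{N}^2$ each appended edge forces at most one new square, appending a red edge $f$ immediately after a run of $k$ blue edges rewrites the boundary $\cdots a b^k a=\cdots a^2 b^{2k}$ and hence forces a cascade of $k$ square completions that doubles the number of blue edges. I would handle this by a secondary induction on $k$, the length of the trailing blue run: the rightmost blue edge together with $f$ is a $c_bc_a$ path, so completeness condition~2 yields a unique $\varphi\in\mathcal{C}$ whose opposite side $\varphi(e,a)\varphi(a,ab)\varphi(ab,ab^2)$ produces a fresh red edge one step to the left, which pairs with the next blue edge to trigger the next square, and so on until the red edge has been pushed past all $k$ blues. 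At each stage I would invoke the inductive hypothesis on the appropriate shorter path to define $\lambda_x$ on the newly tiled region (mirroring the $\lambda^j$ construction of Theorem~\ref{trav}), obtaining a morphism that is well defined on overlaps, is $\mathcal{C}$-compatible because every square used lies in $\mathcal{C}$, and is traversed by $x$. The analogous but non-cascading argument, using completeness condition~1, handles appending a blue edge that completes a red--blue pattern.

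Finally, for uniqueness I would take any $\mathcal{C}$-compatible $\mu$ traversed by $x$; restricting to $y$ shows $\mu|_{E_w}$ is traversed by $y$, so the inductive hypothesis forces $\mu|_{E_w}=\lambda_y=\lambda_x|_{E_w}$, and on the added region the traversal condition pins down the boundary edges while the uniqueness clauses in the two completeness conditions force each completed square of $\mu$ to coincide with the corresponding $\varphi$ used to build $\lambda_x$; propagating this through the cascade gives $\mu=\lambda_x$. The main obstacle is precisely this cascade: I expect the delicate points to be verifying that the secondary induction terminates, that the successively tiled pieces agree on their overlaps, and that the doubling of the blue run is correctly matched by the domain $E_{d^*(x)}$, since this is the genuinely new phenomenon absent from the $\mathbb{N}^k$ proof of Theorem~\ref{trav}.
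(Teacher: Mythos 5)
Your proposal follows essentially the same route as the paper's proof: induction on $|x|$, peeling off the final edge $f$, completing a square via completeness condition 1 or 2 according to the colour pattern created, and---in the genuinely new case of a red edge appended after a trailing blue run $b^M$ in the longest form of $d^*(y)$---propagating a cascade of $M$ squares exactly as the paper does with its recursively defined $\phi_1,\dots,\phi_M$, with uniqueness obtained by restricting a competitor $\mu$ to $E_{d^*(y)}$ and letting the uniqueness clauses of the completeness conditions pin down each square in turn. The only organisational difference is that the paper splits cases by the normal form of $d^*(y)$ (ending in $b$ versus in $a$) and, when it ends in $b$, discharges the cascade by applying the main inductive hypothesis to the length-$n$ path $y'f'$ rather than by a secondary induction along the blue run, while in the other case it simply glues the strips $\phi_n$ onto $\lambda_y$ without re-invoking the inductive hypothesis on the newly tiled region; both devices resolve the termination and overlap concerns you flag.
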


We will prove by induction over $|x|$. If $|x|=0$ so that $x$ is a vertex, then $\lambda_x: E_e \rightarrow E$ with $\lambda_x(e) \mapsto x$ is the unique coloured-graph morphism such that $x$ traverses $\lambda_x$. That $\lambda_x$ is $\mathcal{C}$-compatible is trivial given that no squares exist in $\lambda_x$.

\vspace{7pt}

Suppose, for the inductive hypothesis, that for every $y \in E^*$ with $|y| \le n$, the path $y$ traverses a unique coloured graph morphism $\lambda_y: E_{d^*(y)} \rightarrow E$. Fix a path $x \in E^*$ with $|x| = n+1$. Express $x=yf$ where $y \in E^*$ and $f \in E^1$ and $c(f)= c_l$ where $l \in \{a,b\}$.

\vspace{7pt}

Let $w:= d^*(y) = q(c(y)) $. Express $w$ in its shortest form $w= b^ma^{n_1}ba^{n_2}ba^{n_3}b...a^{n_k}ba^{n_0}$ where $n_i \not= 0$ $ \forall i\not= 0$. Then either

\begin{itemize}
\item[Case 1:] $n_0 =0$ so that $ w  = b^ma^{n_1}ba^{n_2}ba^{n_3}b...a^{n_k}b$ where $n_i \not= 0$ $ \forall i \not= 0$
\item[Case 2:] $n_0 \not =0$ so that $w  = b^ma^{n_1}ba^{n_2}ba^{n_3}b...a^{n_k}ba^{n_0}$ where $n_i \not= 0$ $ \forall i$
\end{itemize}

For Case 1: $E_{w}$ contains, by definition, all $w'\in BS(2,1)^+$ such that $w=w'w''$ for some $w''\in BS(2,1)^+$, and edges whose source and range belong to the set of vertices and differ by a basis element, characterising $E_{w}$ by the blue edge $(wb^{-1}, w)$ seen in Figure~\ref{fig1}.

\begin{fig}{Image of $E_w$ as the domain of $\lambda_y$}\label{fig1}
\[
\begin{tikzpicture}[>=stealth,xscale=2, yscale=2]
\node[inner sep = 0.8pt, label=below:{$e$}] (00) at (0,0) {$\scriptstyle\bullet$};
\node[inner sep = 0.8pt, label=below:{$b$}] (20) at (2,0) {$\scriptstyle\bullet$};
\node[inner sep = 0.8pt, label=below:{$b^2$}] (40) at (4,0) {$\scriptstyle\bullet$};
\node[inner sep = 0.8pt, label=below:{$$}] (450) at (4.5,0) {$\cdots$};
\node[inner sep = 0.8pt, label=below:{$$}] (253) at (2.5,3) {$\cdots$};
\node[inner sep = 0.8pt, label=below:{$$}] (14) at (1,4) {$\cdots$};
\node[inner sep = 0.8pt, label=below:{$$}] (5515) at (5.5,1.5) {$\cdots$};
\node[inner sep = 0.8pt, label=below:{$$}] (625) at (6,2.5) {$\vdots$};
\node[inner sep = 0.8pt, label=below:{$$}] (51) at (5,1) {$\vdots$};
\node[inner sep = 0.8pt, label=below:{$$}] (035) at (0,3.5) {$\vdots$};
\node[inner sep = 0.8pt, label=left:{$a$}] (02) at (0,2) {$\scriptstyle\bullet$};
\node[inner sep = 0.8pt, label=below:{$$}] (12) at (1,2) {$\scriptstyle\bullet$};
\node[inner sep = 0.8pt, label=below:{$$}] (22) at (2,2) {$\scriptstyle\bullet$};
\node[inner sep = 0.8pt, label=below:{$$}] (32) at (3,2) {$\scriptstyle\bullet$};
\node[inner sep = 0.8pt, label=below:{$$}] (42) at (4,2) {$\scriptstyle\bullet$};
\node[inner sep = 0.8pt, label=left:{$a^2$}] (03) at (0,3) {$\scriptstyle\bullet$};
\node[inner sep = 0.8pt, label=below:{$$}] (053) at (0.5,3) {$\scriptstyle\bullet$};
\node[inner sep = 0.8pt, label=below:{$$}] (13) at (1,3) {$\scriptstyle\bullet$};
\node[inner sep = 0.8pt, label=below:{$$}] (153) at (1.5,3) {$\scriptstyle\bullet$};
\node[inner sep = 0.8pt, label=below:{$$}] (23) at (2,3) {$\scriptstyle\bullet$};
\node[inner sep = 0.8pt, label=below:{$$}] (04) at (0,4) {$\scriptstyle\bullet$};
\node[inner sep = 0.8pt, label=below:{$$}] (64) at (6,4) {$\scriptstyle\bullet$};
\node[inner sep = 0.8pt, label=below:{$wb^{-2}$}] (5754) at (5.75,4) {$\scriptstyle\bullet$};
\node[inner sep = 0.8pt, label=above:{$wb^{-1}$}] (64) at (6,4) {$\scriptstyle\bullet$};
\node[inner sep = 0.8pt, label=above:{$w$}] (6254) at (6.25,4) {$\scriptstyle\bullet$};
\node[inner sep = 0.8pt, label=below:{$wb^{-1}a^{-1}$}] (635) at (6,3.5) {$\scriptstyle\bullet$};
\node[inner sep = 0.8pt, label=below:{$$}] (554) at (5.5,4) {$\cdots$};
\node[inner sep = 0.8pt, label=below:{$$}] (5753) at (5.75,3.5) {$\cdots$};

\draw[blue, <-] (00) to node[auto,black, swap]{$(e,b)$} (20) ;
\draw[blue, <-] (20) to node[auto,black, swap]{$(b,b^2)$} (40) ;
\draw[blue, <-] (02) to node[auto,black, swap]{$(a,ab)$} (12) ;
\draw[blue, <-] (12) to node[auto,black, swap]{$(ab,ab^2)$} (22) ;
\draw[blue, <-] (22) to node[auto,black, swap]{$(ab^2,ab^3)$} (32) ;
\draw[blue, <-] (32) to node[auto,black, swap]{$(ab^3,ab^4)$} (42) ;

\draw[blue, <-] (03) to node[auto,black]{$$} (053) ;
\draw[blue, <-] (053) to node[auto,black]{$$} (13) ;
\draw[blue, <-] (13) to node[auto,black]{$$} (153) ;
\draw[blue, <-] (153) to node[auto,black]{$$} (23) ;
\draw[blue, <-] (5754) to node[auto,black]{$$} (64) ;
\draw[blue, <-] (64) to node[auto,black]{$$} (6254) ;

\draw[red, <-, dashed] (00) to node[auto,black]{$(e,a)$} (02) ;
\draw[red, <-, dashed] (20) to node[auto,black]{$$} (22) ;
\draw[red, <-, dashed] (40) to node[auto,black]{$$} (42) ;
\draw[red, <-, dashed] (02) to node[auto,black]{$(a,a^2)$} (03) ;
\draw[red, <-, dashed] (12) to node[auto,black]{$$} (13) ;
\draw[red, <-, dashed] (22) to node[auto,black]{$$} (23) ;
\draw[red, <-, dashed] (635) to node[auto,black]{$$} (64) ;

\end{tikzpicture}
\]
\end{fig}

Suppose first that $c(f)=c_b$.  We have $|y|= n$, so $| \lfloor y \rfloor| \le n$. We know now that that path $\lfloor y \rfloor$ ends in a red then a blue edge. Write $\lfloor y \rfloor =  \lfloor y \rfloor_1 \lfloor y \rfloor_2... \lfloor y \rfloor_{k}$ and define $y' = \lfloor y \rfloor_1 \lfloor y \rfloor_2... \lfloor y \rfloor_{k-2}$ so that $|y'| \le n-2$. Hence  $y'$  traverses $\lambda_y | _{E_{wb^{-1}a^{-1}}}$. 

Let $\phi$ be the unique square in $\mathcal{C}$ traversed by $\lambda(wb^{-1}a^{-1}, wb^{-1})\lambda(wb^{-1},w)f$ so
\begin{align*}
\lambda(wb^{-1}a^{-1}, wb^{-1})\lambda(wb^{-1},w)f \sim \phi(wb^{-1}a^{-1}, wba^{-1})\phi(wb,wba^{-1})
\end{align*}
\begin{fig}{Image of $\phi$}
\[
\begin{tikzpicture}[>=stealth,xscale=4, yscale=4]

\node[inner sep = 0.8pt, label=below:{$$}] (00) at (0,0) {$\scriptstyle\bullet$};
\node[inner sep = 0.8pt, label=below:{$$}] (10) at (1,0) {$\scriptstyle\bullet$};
\node[inner sep = 0.8pt, label=above:{$$}] (01) at (0,1) {$\scriptstyle\bullet$};
\node[inner sep = 0.8pt, label=above:{$$}] (051) at (0.5,1) {$\scriptstyle\bullet$};
\node[inner sep = 0.8pt, label=above:{$$}] (11) at (1,1) {$\scriptstyle\bullet$};

\draw[blue, <-] (00) to node[auto,black,swap]{$\phi( wb^{-1}a^{-1}, wba^{-1})$} (10) ;
\draw[blue, <-] (01) to node[auto,black]{$\lambda(wb^{-1},w)$} (051) ;
\draw[blue, <-] (051) to node[auto,black]{$f$} (11) ;

\draw[red, <-, dashed] (00) to node[auto,black]{$\lambda(wb^{-1}a^{-1}, wb^{-1})$} (01) ;
\draw[red, <-, dashed] (10) to node[auto,black, swap]{$\phi(wb,wba^{-1})$} (11) ;

\end{tikzpicture}
\]
\end{fig}

Let $x'= y'\phi(wb^{-1}a^{-1}, wba^{-1})\phi(wb,wba^{-1})$. We have $|x'| \le n$ with $d^*(x')= wb$, so the inductive hypothesis implies that $x'$ traverses a unique $\mathcal{C}$-compatible coloured-graph morphism $\lambda_{x'} :E_{wb} \rightarrow E$.

\begin{lem}\label{lem1}
Suppose $z\in E^n$ has $c(z_{n-1}z_n)=c_bc_a$ and $z$ traverses a $\mathcal{C}$-compatible 2-coloured graph-morphism $\lambda:E_{d^*(z)}\rightarrow E$. Then
\begin{align*}
z_1...z_{n-2}\lambda(d^*(z)a^{-1}b^{-1}, d^*(z)a^{-1}b^{-1}a) \lambda(d^*(z)a^{-1}b^{-1}a, d^*(z)a^{-1}b^{-1}ab)\lambda(d^*(z)a^{-1}b^{-1}ab, d^*(z)a^{-1}b^{-1}ab^2)
\end{align*}
also traverses $\lambda$.
\end{lem}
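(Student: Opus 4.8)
The plan is to reduce the claim to the concatenation Lemma~\ref{lem2} by recognising the displayed replacement as gluing the unchanged prefix $z_1\cdots z_{n-2}$ of $z$ onto the red--blue--blue side of a single square sitting at the end of $\lambda$.

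First I would fix notation. Write $w := d^*(z)$ and $p := d^*(z_1\cdots z_{n-2})$. Since $c(z_{n-1}z_n)=c_bc_a$, the last two colours contribute $b$ then $a$, so $w = pba$. The whole reason for working in $BS(2,1)^+$ is the relation $ab^2=ba$, which gives $w = pab^2$ and hence, computing in the group $BS(2,1)$, $wa^{-1}b^{-1}=p$. Consequently $wa^{-1}b^{-1}a = pa$, $wa^{-1}b^{-1}ab = pab$, and $wa^{-1}b^{-1}ab^2 = pab^2 = w$; in particular each of $pa,pab,pab^2$ is $\le w$, so the three edges
\[
\lambda(p,pa),\quad \lambda(pa,pab),\quad \lambda(pab,pab^2)
\]
appearing in the statement are genuinely edges of $E_w$, coloured red, blue, blue respectively. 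I would stress that this bookkeeping---verifying that $d^*(z)a^{-1}b^{-1}ab^2$ collapses back to $d^*(z)$ via $ab^2=ba$---is the one place the semigroup relation is used, and it is the step most prone to error.

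Next I would split $w = w_1 w_2$ with $w_1 = p$ and $w_2 = ba$, so that $w_1w_2 = pba = w$. Because $z$ traverses $\lambda$, its first $n-2$ edges traverse the restriction $\lambda|_{E_{[e,p]}}$; this is immediate from the traversal condition applied to the indices $0<m\le n-2$. I then set $\psi := \lambda|^*_{E_{[p,w]}}$, a coloured-graph morphism on $E_{p^{-1}w}=E_{ba}$, i.e.\ a square, whose edges satisfy $\psi(u,ul)=\lambda(pu,pul)$. The red--blue--blue path $\varphi' := \lambda(p,pa)\lambda(pa,pab)\lambda(pab,pab^2)$ is exactly the image under $\psi$ of $(e,a)(a,ab)(ab,ab^2)$ in $E_{ba}$, and one checks directly that it traverses $\psi$: its degree is $ab^2=ba=d(\psi)$, and the identities $\psi(e,a)=\lambda(p,pa)$, $\psi(a,ab)=\lambda(pa,pab)$, $\psi(ab,ab^2)=\lambda(pab,pab^2)$ hold by construction. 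The hypothesis that $\lambda$ is $\mathcal{C}$-compatible guarantees $\psi\in\mathcal{C}$, and since the blue--red side $\psi(e,b)\psi(b,ba)$ of $\psi$ equals $z_{n-1}z_n$, completeness identifies $\psi$ as the unique square of $\mathcal{C}$ traversed by $z_{n-1}z_n$; this is the conceptual content, though the bare traversal claim already follows from $\psi$ being a restriction of $\lambda$.

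Finally I would invoke Lemma~\ref{lem2} with the decomposition $w=w_1w_2=p\cdot ba$: since $z_1\cdots z_{n-2}$ traverses $\lambda|_{E_{[e,p]}}$ and $\varphi'$ traverses $\lambda|^*_{E_{[p,w]}}$, the concatenation $(z_1\cdots z_{n-2})\varphi'$ traverses $\lambda$. This concatenation is precisely the path displayed in the statement, which completes the argument. The only real obstacle is the coordinate computation in the first step; once $pa,pab,pab^2=w$ are confirmed to lie in $E_w$ as the opposite side of a single square, the result is a one-line application of Lemma~\ref{lem2}.
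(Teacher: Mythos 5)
Your proof is correct, but it is organised differently from the paper's. The paper proves this lemma by direct verification of the definition of traversal: it names the three replacement edges $z_{n-1}'$, $z_n'$, $z_{n+1}'$, computes $q(c(z'))=q(c(z_1\cdots z_{n-2}))ab^2=q(c(z_1\cdots z_{n-2}))ba=q(c(z))$ using the relation $ab^2=ba$ exactly as you do, and then checks the edge conditions one by one (the first $n-2$ are inherited from the traversal $z$, and the last three hold by the very definition of the primed edges). You instead factor $d^*(z)=p\cdot ba$ with $p=d^*(z_1\cdots z_{n-2})$, observe that the displayed tail is precisely the red--blue--blue side of the terminal square $\lambda|^*_{E_{[p,\,d^*(z)]}}$ and hence a traversal of it, and then invoke the concatenation Lemma~\ref{lem2}. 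Both arguments rest on the same key computation ($pba=pab^2$, so the new path has the correct degree and its edges genuinely lie in $E_{d^*(z)}$); your route buys a shorter write-up and makes explicit that the lemma is really about exchanging one side of a single square for the other, while the paper's direct check buys independence from Lemma~\ref{lem2} --- though that lemma is stated and proved earlier by a self-contained computation, so there is no circularity in your reduction. Your side remarks identifying $\psi$ as an element of $\mathcal{C}$ via compatibility and completeness are correct but, as you yourself note, not needed for the bare traversal claim.
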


\begin{proof}
Suppose $z=z_1z_2...z_n$ traverses $\lambda$ as hypothesised above. Then $d(\lambda)=q(c(z))$ and 
\begin{align*}
\lambda(q(c(z_1z_2...z_{m-1})), q(c(z_1z_2...z_{m-1}))l_m) = z_m
\end{align*} for all $0 < m \le |z|$.

Let
\begin{align*}
z_{n-1}'&=\lambda(d(z)a^{-1}b^{-1}, d(z)a^{-1}b^{-1}a) \\
z_n'&=\lambda(d(z)a^{-1}b^{-1}a, d(z)a^{-1}b^{-1}ab) \\
z_{n+1}'&=\lambda(d(z)a^{-1}b^{-1}ab, d(z)a^{-1}b^{-1}ab^2)\\
\text{ and }z'&=z_1...z_{n-2}z_{n-1}'z_n'z_{n+1}'
\end{align*}
Then 
\begin{align*}
q(c(z'))=& q(c(z_1...z_{n-2})c(z_{n-1}'z_n'z_{n+1}') \\
=&  q(c(z_1...z_{n-2})c_ac_bc_b) \\
=& q(c(z_1...z_{n-2}))ab^2 \\
=& q(c(z_1...z_{n-2}))ba \\
=& q(c(z))
\end{align*}

Because $z$ traverses $\lambda$, we know
\begin{align*}
&\lambda(q(c(z_1z_2...z_{m-1})), q(c(z_1z_2...z_{m-1}))l_m) = z_m \text{ for all $0 < m \le |z|-2$.}\\
&\lambda(q(c(z_1z_2...z_{n-2})), q(c(z_1z_2...z_{n-2}))a) = z_{n-1}' \\
&\lambda(q(c(z_1z_2...z_{n-2}z_{n-1}' )), q(c(z_1z_2...z_{n-2}z_{n-1}' ))b)= z_n' \\
&\lambda(q(c(z_1z_2...z_{n-2}z_{n-1}'z_n' )), q(c(z_1z_2...z_{n-2}z_{n-1}'z_n' ))b)= z_{n+1}'
\end{align*}

Hence $z'$ traverses $\lambda$.
\end{proof}

\emph{proof. of Theorem~\ref{thm1} continued:} Because $|x'| \le n$ and $c(x')= c_{\lfloor y \rfloor_1}c_{\lfloor y \rfloor_2}...c_bc_a$, Lemma~\ref{lem1} give the path $\lfloor y \rfloor f$ as a traversal of $\lambda'$. Since  $\lfloor y \rfloor \sim y$, then $\lfloor y \rfloor f \sim yf$. Hence $x =yf$ traverses the unique $\mathcal{C}$-compatible graph morphism $\lambda_{x'}$.

Suppose $c(f)=c_a$. Write $y=y_1 y_2.... y_k = y' y_k$ so that $|y'|= n-1$. Let $\phi'$ be the unique square in $\mathcal{C}$ traversed by $y_k f$. Let $f' = \phi'(wb^{-1}, wb^{-1}a)$.
\begin{fig}{Image of $\phi'$}
\[
\begin{tikzpicture}[>=stealth,xscale=4, yscale=4]

\node[inner sep = 0.8pt, label=below:{$$}] (00) at (0,0) {$\scriptstyle\bullet$};
\node[inner sep = 0.8pt, label=below:{$$}] (10) at (1,0) {$\scriptstyle\bullet$};
\node[inner sep = 0.8pt, label=above:{$$}] (01) at (0,1) {$\scriptstyle\bullet$};
\node[inner sep = 0.8pt, label=above:{$$}] (051) at (0.5,1) {$\scriptstyle\bullet$};
\node[inner sep = 0.8pt, label=above:{$$}] (11) at (1,1) {$\scriptstyle\bullet$};

\draw[blue, <-] (00) to node[auto,black,swap]{$y_k$} (10) ;
\draw[blue, <-] (01) to node[pos=0.1,anchor=south,black]{$\phi'(wab^{-2}, wab^{-1})$} (051) ;
\draw[blue, <-] (051) to node[pos=0.9,anchor=south,black]{$\phi'(wab^{-1}, wa)$} (11) ;

\draw[red, <-, dashed] (00) to node[auto,black]{$f'$} (01) ;
\draw[red, <-, dashed] (10) to node[auto,black, swap]{$f$} (11) ;

\end{tikzpicture}
\]
\end{fig}

We have $|y'f'| = n$, so the inductive hypothesis gives a unique $\mathcal{C}$-compatible coloured-graph morphism $\lambda_{y'f'}: E_{d^*(y'f')}\rightarrow E$ with $y'f'$ traversing $\lambda_{y'f'}$. Note that
\begin{align*}
E_{d^*(x)} = E_{d^*(y)} \cup E_{d^*(y'f')} \cup E_{[wb^{-1}, wa]}.
\end{align*}

So we can define the coloured-graph morphism $\lambda_x$ by the equations
\begin{align*}
&\lambda_x|_{E_{d^*(y'f')}}=\lambda_{y'f'} \\
&\lambda_x|_{E_{d^*(y)}} = \lambda_y \\
&\lambda_x|_{E_{[wb^{-1}, wa]}}= \phi'
\end{align*}
such that $x=yf$ traverses $\lambda_x$. Since $\lambda_{y'f'}$ and $\lambda_y$ are both $\mathcal{C}$-compatible, and $\phi'$ belongs to $\mathcal{C}$,  $\lambda_x$ is also $\mathcal{C}$-compatible. To see that $\lambda_x$ is unique, fix a $\mathcal{C}$-compatible coloured-graph morphism $\mu$ traversed by $x$. Then $y$ traverses $\mu|_{E_{d(y)}}$, so the inductive hypothesis forces $\mu|_{E_{d(y)}}= \lambda_y =\lambda_x|_{E_{d(y)}}$. That $\mu$ is $\mathcal{C}$-compatible forces $\mu|_{E_{[wb^{-1}, wa]}}= \phi'$, so that $y'f'$ traverses $\mu|_{E_{d(y'f')}}=\lambda_{y'f'}=\lambda_x|_{E_{d(y'f')}} $. This completes the proof for Case 1.

\vspace{20pt}

For Case 2:

\begin{fig}{Image of the 2-coloured graph $E_w$, and domain of the coloured-graph morphism $\lambda_y$}

\[
\begin{tikzpicture}[>=stealth,xscale=2, yscale=2]
\node[inner sep = 0.8pt, label=below:{$e$}] (00) at (0,0) {$\scriptstyle\bullet$};
\node[inner sep = 0.8pt, label=below:{$b$}] (20) at (2,0) {$\scriptstyle\bullet$};
\node[inner sep = 0.8pt, label=below:{$b^2$}] (40) at (4,0) {$\scriptstyle\bullet$};
\node[inner sep = 0.8pt, label=below:{$$}] (450) at (4.5,0) {$\cdots$};
\node[inner sep = 0.8pt, label=below:{$$}] (253) at (2.5,3) {$\cdots$};
\node[inner sep = 0.8pt, label=below:{$$}] (14) at (1,4) {$\cdots$};
\node[inner sep = 0.8pt, label=below:{$$}] (5515) at (5.5,1.5) {$\cdots$};
\node[inner sep = 0.8pt, label=below:{$$}] (625) at (6,2.5) {$\vdots$};
\node[inner sep = 0.8pt, label=below:{$$}] (51) at (5,1) {$\vdots$};
\node[inner sep = 0.8pt, label=below:{$$}] (035) at (0,3.5) {$\vdots$};
\node[inner sep = 0.8pt, label=left:{$a$}] (02) at (0,2) {$\scriptstyle\bullet$};
\node[inner sep = 0.8pt, label=below:{$$}] (12) at (1,2) {$\scriptstyle\bullet$};
\node[inner sep = 0.8pt, label=below:{$$}] (22) at (2,2) {$\scriptstyle\bullet$};
\node[inner sep = 0.8pt, label=below:{$$}] (32) at (3,2) {$\scriptstyle\bullet$};
\node[inner sep = 0.8pt, label=below:{$$}] (42) at (4,2) {$\scriptstyle\bullet$};
\node[inner sep = 0.8pt, label=left:{$a^2$}] (03) at (0,3) {$\scriptstyle\bullet$};
\node[inner sep = 0.8pt, label=below:{$$}] (053) at (0.5,3) {$\scriptstyle\bullet$};
\node[inner sep = 0.8pt, label=below:{$$}] (13) at (1,3) {$\scriptstyle\bullet$};
\node[inner sep = 0.8pt, label=below:{$$}] (153) at (1.5,3) {$\scriptstyle\bullet$};
\node[inner sep = 0.8pt, label=below:{$$}] (23) at (2,3) {$\scriptstyle\bullet$};
\node[inner sep = 0.8pt, label=below:{$$}] (04) at (0,4) {$\scriptstyle\bullet$};

\node[inner sep = 0.8pt, label=above:{$$}] (554) at (5.5,4) {$\scriptstyle\bullet$};
\node[inner sep = 0.8pt, label=below:{$$}] (5754) at (5.75,4) {$\scriptstyle\bullet$};
\node[inner sep = 0.8pt, label=above:{$w$}] (64) at (6,4) {$\scriptstyle\bullet$};
\node[inner sep = 0.8pt, label=below:{$$}] (635) at (6,3.5) {$\scriptstyle\bullet$};
\node[inner sep = 0.8pt, label=above:{$$}] (5535) at (5.5,3.5) {$\scriptstyle\bullet$};

\node[inner sep = 0.8pt, label=below:{$$}] (54) at (5,4) {$\cdots$};
\node[inner sep = 0.8pt, label=below:{$$}] (535) at (5,3.5) {$\cdots$};

\draw[blue, <-] (00) to node[auto,black, swap]{$(e,b)$} (20) ;
\draw[blue, <-] (20) to node[auto,black, swap]{$(b,b^2)$} (40) ;
\draw[blue, <-] (02) to node[auto,black, swap]{$(a,ab)$} (12) ;
\draw[blue, <-] (12) to node[auto,black, swap]{$(ab,ab^2)$} (22) ;
\draw[blue, <-] (22) to node[auto,black, swap]{$(ab^2,ab^3)$} (32) ;
\draw[blue, <-] (32) to node[auto,black, swap]{$(ab^3,ab^4)$} (42) ;

\draw[blue, <-] (03) to node[auto,black]{$$} (053) ;
\draw[blue, <-] (053) to node[auto,black]{$$} (13) ;
\draw[blue, <-] (13) to node[auto,black]{$$} (153) ;
\draw[blue, <-] (153) to node[auto,black]{$$} (23) ;
\draw[blue, <-] (5754) to node[auto,black]{$$} (64) ;
\draw[blue, <-] (554) to node[auto,black]{$$} (5754) ;
\draw[blue, <-] (5535) to node[auto,black]{$$} (635) ;

\draw[red, <-, dashed] (00) to node[auto,black]{$(e,a)$} (02) ;
\draw[red, <-, dashed] (20) to node[auto,black]{$$} (22) ;
\draw[red, <-, dashed] (40) to node[auto,black]{$$} (42) ;
\draw[red, <-, dashed] (02) to node[auto,black]{$(a,a^2)$} (03) ;
\draw[red, <-, dashed] (12) to node[auto,black]{$$} (13) ;
\draw[red, <-, dashed] (22) to node[auto,black]{$$} (23) ;
\draw[red, <-, dashed] (635) to node[auto,black]{$$} (64) ;
\draw[red, <-, dashed] (5535) to node[auto,black]{$$} (554) ;

\end{tikzpicture}
\]
\end{fig}

Suppose $c(f)=c_b$. We have $E_{wb} = E_w \cup E_{[w, wb]}$. Define the coloured-graph morphism $\lambda_x$ by the formulae
\begin{align*}
\lambda_x|_{E_w} =\lambda_y|_{E_w}, \text{    }  \lambda_x(w, wb)=f   \text{ and   }  \lambda_x(wb)=s(f)
\end{align*}

By construction, $x$ traverses $\lambda_x$.  Given $\lambda_y$ is $\mathcal{C}$-compatible and unique, that $\lambda_x$ is $\mathcal{C}$-compatible and unique is trivial. 

\vspace{7pt}
Now suppose that $c(f)=c_a$. Write $d(y)=w$ in its longest form, so that $w=a^Nb^M$ for some $N,M\in  \mathbb{N}$.

Let $\phi_1$ be the unique square in $\mathcal{C}$ traversed by the blue-red path $\lambda_y(wb^{-1},w)f$.
\begin{fig}{Image of $\phi_1$}
\[
\begin{tikzpicture}[>=stealth,xscale=4, yscale=4]

\node[inner sep = 0.8pt, label=below:{$$}] (00) at (0,0) {$\scriptstyle\bullet$};
\node[inner sep = 0.8pt, label=below:{$$}] (10) at (1,0) {$\scriptstyle\bullet$};
\node[inner sep = 0.8pt, label=above:{$$}] (01) at (0,1) {$\scriptstyle\bullet$};
\node[inner sep = 0.8pt, label=above:{$$}] (051) at (0.5,1) {$\scriptstyle\bullet$};
\node[inner sep = 0.8pt, label=above:{$$}] (11) at (1,1) {$\scriptstyle\bullet$};

\draw[blue, <-] (00) to node[auto,black,swap]{$\lambda_y(wb^{-1},w)$} (10) ;
\draw[blue, <-] (01) to node[pos=0.1,anchor=south,black]{$\phi_1(wab^{-2}, wab^{-1})$} (051) ;
\draw[blue, <-] (051) to node[pos=0.9,anchor=south,black]{$\phi_1(wab^{-1}, wa)$} (11) ;

\draw[red, <-, dashed] (00) to node[auto,black]{$\phi_1(wb^{-1}, wb^{-1}a)$} (01) ;
\draw[red, <-, dashed] (10) to node[auto,black,swap]{$f$} (11) ;

\end{tikzpicture}
\]
\end{fig}

Define recursively the squares  $\phi_n \in \mathcal{C}$  where $\phi_n$ is uniquely determined by the traversal $\lambda_y(wb^{-n}, wb^{-n+1})\phi_{n-1}(wb^{-n+1}, wb^{-n+1}a)$ for $1<n \le M$.

\begin{fig}{Image of $\phi_n$}
\[
\begin{tikzpicture}[>=stealth,xscale=4, yscale=4]

\node[inner sep = 0.8pt, label=below:{$$}] (00) at (0,0) {$\scriptstyle\bullet$};
\node[inner sep = 0.8pt, label=below:{$$}] (10) at (1,0) {$\scriptstyle\bullet$};
\node[inner sep = 0.8pt, label=above:{$$}] (01) at (0,1) {$\scriptstyle\bullet$};
\node[inner sep = 0.8pt, label=above:{$$}] (051) at (0.5,1) {$\scriptstyle\bullet$};
\node[inner sep = 0.8pt, label=above:{$$}] (11) at (1,1) {$\scriptstyle\bullet$};

\draw[blue, <-] (00) to node[auto,black,swap]{$\lambda_y(wb^{-n}, wb^{-n+1})$} (10) ;
\draw[blue, <-] (01) to node[pos=0.01,anchor=south,black]{$\phi_n(wb^{-n}a, wb^{-n}ab)$} (051) ;
\draw[blue, <-] (051) to node[pos=0.99,anchor=south,black]{$\phi_n(wb^{-n}ab, wb^{-n+1}a)$} (11) ;

\draw[red, <-, dashed] (00) to node[auto,black]{$\phi_n(wb^{-n},wb^{-n}a)$} (01) ;
\draw[red, <-, dashed] (10) to node[auto,black,swap]{$\phi_{n-1}(wb^{-n+1}, wb^{-n+1}a)$} (11) ;

\end{tikzpicture}
\]
\end{fig}

Define the coloured-graph morphism $\lambda_x$ by the formulae
\begin{align*}
&\lambda_x|_{E_{d^*(y)}} = \lambda_y \\
&\lambda_x|_{E_{[wb^{-n}, wb^{-n+1}a]}} = \phi_n \text{  for $1\le n \le M$}
\end{align*}

Then $x$ traverses $\lambda_x$, and $\lambda_x$ is $\mathcal{C}$-compatible because $\lambda_y$ is $\mathcal{C}$-compatible and $\phi_n \in \mathcal{C}$ for $1\le n \le M$.

\vspace{7pt}
To see that $\lambda_x$ is unique, suppose there exists a $\mathcal{C}$-compatible coloured-graph morphism $\mu$ such that $x$ traverses $\mu$. Then $y$ traverses $\mu|_{E_{d^*(y)}}$, so the inductive hypothesis forces $\lambda_x|_{E_{d^*(y)}}=\mu|_{E_{d^*(y)}} = \lambda_y$. Now we have 
\begin{align*}
\lambda_y(wb^{-n}, wb^{-n+1})= \lambda_x(wb^{-n}, wb^{-n+1}) = \mu(wb^{-n}, wb^{-n+1})\text{ for all $1\le n \le M$.}
\end{align*}
Hence $\mu|_{E_{[wb^{-n}, wb^{-n+1}a]}} = \phi_n$ for all $1\le n \le M$. So $\mu = \lambda_x$.

\begin{cor}\label{cor1}
Let $E$ be a 2-coloured graph and let $\mathcal{C}$ be a complete collection of squares in $E$. If $\mu: E_{w_1} \rightarrow E$ and $\nu: E_{w_2} \rightarrow E$ are  $\mathcal{C}$-compatible coloured-graph morphisms such that $s(\mu)=r(\nu)$, then there exists a unique $\mathcal{C}$-compatible coloured-graph morphism $\mu\nu: E_{w_1w_2} \rightarrow E$, called the composition of $\mu$ and $\nu$ such that $(\mu\nu)|_{E_{w_1}}= \mu$ and $(\mu\nu)|^*_{E_{[w_1,w_1w_2]}}=\nu$.
\end{cor}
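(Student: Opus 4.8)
The plan is to mirror the proof of the earlier composition theorem for $\mathbb{N}^k$-shaped morphisms, now invoking Lemma~\ref{lem2} and Theorem~\ref{thm1} in place of their $\mathbb{N}^2$-analogues. The whole argument is driven by passing freely between a morphism and any path that traverses it, so the first move is to fix traversals. Since every coloured-graph morphism admits a traversal, choose $x \in E^*$ traversing $\mu$ and $y \in E^*$ traversing $\nu$. Because $x$ traverses $\mu$ we have $d^*(x) = d(\mu) = w_1$, and likewise $d^*(y) = w_2$; moreover $x$ ends at $s(\mu)$ and $y$ begins at $r(\nu)$, so the hypothesis $s(\mu) = r(\nu)$ guarantees that the concatenation $xy$ is a genuine path in $E^*$. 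Its degree is $d^*(xy) = q(c(x)c(y)) = q(c(x))q(c(y)) = w_1 w_2$.

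For existence, I would apply Theorem~\ref{thm1} to $xy$: there is a unique $\mathcal{C}$-compatible coloured-graph morphism, which I define to be $\mu\nu := \lambda_{xy} : E_{w_1w_2} \to E$, traversed by $xy$. It then remains to identify its two prescribed restrictions. Unwinding the definition of traversal, the first $|x|$ edges of $xy$ are precisely those recorded on the initial part of the degree word belonging to $w_1$, so $x$ traverses $(\mu\nu)|_{E_{[e,w_1]}}$ and $y$ traverses $(\mu\nu)|^*_{E_{[w_1,w_1w_2]}}$ (this is the converse reading of Lemma~\ref{lem2}). Each such restriction is again $\mathcal{C}$-compatible, since any square occurring in a restriction occurs in $\mu\nu$ and hence lies in $\mathcal{C}$. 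Therefore the uniqueness clause of Theorem~\ref{thm1}, applied to $x$ and to $y$ respectively, forces $(\mu\nu)|_{E_{w_1}} = \mu$ and $(\mu\nu)|^*_{E_{[w_1,w_1w_2]}} = \nu$.

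For uniqueness, suppose $\lambda : E_{w_1w_2} \to E$ is any $\mathcal{C}$-compatible morphism with $\lambda|_{E_{w_1}} = \mu$ and $\lambda|^*_{E_{[w_1,w_1w_2]}} = \nu$. Then $x$ traverses $\lambda|_{E_{[e,w_1]}}$ and $y$ traverses $\lambda|^*_{E_{[w_1,w_1w_2]}}$, so Lemma~\ref{lem2} shows that $xy$ traverses $\lambda$. Since $xy$ also traverses $\mu\nu$, the uniqueness in Theorem~\ref{thm1} gives $\lambda = \mu\nu$. As a by-product, this shows that $\mu\nu$ does not depend on the particular traversals $x,y$ chosen at the start.

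The subtle step — and the one I expect to be the main obstacle — is the converse of Lemma~\ref{lem2} used in the existence argument: that $xy$ traversing $\mu\nu$ forces the initial segment $x$ to traverse the restriction $(\mu\nu)|_{E_{[e,w_1]}}$. In $\mathbb{N}^k$ this is immediate, because the degree of a prefix is just a coordinatewise partial sum, but in $BS(2,1)^+$ the relation $ab^2 = ba$ makes ``prefix of the degree word'' a more delicate notion, since the same semigroup element can arise from paths of different lengths (as in the $\lfloor\,\cdot\,\rfloor$ versus $\lceil\,\cdot\,\rceil$ example). I expect the resolution is to track the traversal edge-by-edge through the normal-form bookkeeping already set up for Lemma~\ref{lem2}, verifying that the partial degrees $d^*(x_1\cdots x_m)$ for $m \le |x|$ remain inside $[e,w_1]$, so that the restriction to $E_{[e,w_1]}$ genuinely records exactly those edges.
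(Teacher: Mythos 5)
Your proposal is correct and follows essentially the same route as the paper: fix traversals $x$ of $\mu$ and $y$ of $\nu$, obtain $\mu\nu$ from Theorem~\ref{thm1} applied to $xy$, identify the restrictions via the uniqueness clause, and prove uniqueness of the composition via Lemma~\ref{lem2} together with Theorem~\ref{thm1}. The ``converse of Lemma~\ref{lem2}'' step you flag as delicate is indeed the only non-routine point, and the paper asserts it without proof in exactly the same place (``Restricting the domain gives that $x$ traverses $(\mu\nu)|_{E_{w_1}}$, because $d^*(x)=w_1$''), so your treatment matches the paper's level of rigour.
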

\begin{proof}
Fix $x,y \in E^*$ such that $x$ traverses $\mu$ and $y$ traverses $\nu$. Then $s(x)=r(y)$ and Theorem~\ref{thm1} implies that $xy$ traverses a unique $\mathcal{C}$-compatible coloured-graph morphism $\mu\nu$. Restricting the domain gives that $x$ traverses $(\mu\nu)|_{E_{w_1}}$, because $d^*(x)=w_1$, and $y$ traverses $(\mu\nu)|^*_{E_{[w_1,w_1w_2]}}$, because $d^*(y)=w_2$. So the uniqueness of $(\mu\nu)|_{E_{w_1}}$ and $(\mu\nu)|^*_{E_{[w_1,w_1w_2]}}$ by Theorem~\ref{thm1} give $(\mu\nu)|_{E_{w_1}}= \mu$ and $(\mu\nu)|^*_{E_{[w_1,w_1w_2]}}=\nu$.

\vspace{7pt}
So we have a unique $\mathcal{C}$-compatible coloured-graph morphism $\mu\nu$ given traversals $x$ of $\mu$ and $y$ of $\nu$. We wish to show that $\mu\nu$ is unique given $\mu$ and $\nu$. Suppose that we start with any other coloured-graph morphism, say $\lambda$, such that $\lambda|_{E_{w_1}}=\mu$ and $\lambda|^*_{E_{[w_1,w_1w_2]}}=\nu$. So $x$ traverses $\lambda|_{E_{w_1}}$ and $y$ traverses $\lambda|^*_{E_{[w_1,w_1w_2]}}$. Then Lemma~\ref{lem2} shows that $xy$ traverses $\lambda$. Since $xy$ traverses both $\lambda$ and $\mu\nu$, the uniqueness in Theorem~\ref{thm1} forces $\lambda=\mu\nu$. Hence the composition of $\mu$ and $\nu$ is unique.
\end{proof}

\begin{rem}\label{rem1} Corollary~\ref{cor1} implies that $\mu:= \lambda|_{E_{w_1}}$ and $\nu:= \lambda|^*_{E_{[w_1,w_1w_2]}}$ satisfy $\lambda=\mu\nu$. We wish to show that $\mu$ and $\nu$ are unique.  Suppose that $\mu':E_{w_1}\rightarrow E$ and $\nu':E_{w_2} \rightarrow E$ are two other $\mathcal{C}$-compatible coloured-graph morphisms such that $\mu'\nu'=\lambda$. Now, $\mu$ and $\mu'$ are coloured-graph morphisms which both have the domain $E_{w_1}$ forcing $\mu'=\lambda|_{E_{w_1}}=\mu$. Similarly, $\nu'=\lambda|^*_{E_{[w_1,w_1w_2]}}=\nu$. So $\mu$ and $\nu$ are unique coloured-graph morphisms such that $d(\mu)=w_1$ and $d(\nu)=w_2$, and $\lambda=\mu\nu$.
\end{rem}

\subsection{Defining a $BS(2,1)^+$-graph}
In this section we will be constructing a Higher-rank graph, called the $BS(2,1)^+$-graph, given a set of coloured-graph morphisms.

\begin{defi}
Let $\mathcal{C}=(\mathcal{C}^0,\mathcal{C}^*, r_{\mathcal{C}},s_{\mathcal{C}})$ and $\mathcal{D}=(\mathcal{D}^0,\mathcal{D}^*, r_{\mathcal{D}},s_{\mathcal{D}})$ be categories. Then $F=(F^0: \mathcal{C}^0 \rightarrow \mathcal{D}^0, F^*:\mathcal{C}^* \rightarrow \mathcal{D}^*)$ is a \emph{functor} if:
\begin{itemize}
\item[(i)]$s_{\mathcal{D}}(F^*(f))=F^0(s_{\mathcal{C}}(f))$ $\forall f \in \mathcal{C}^*$
\item[(ii)]$r_{\mathcal{C}}(F^*(f))=F^0(r_{\mathcal{D}}(f))$ $\forall f \in \mathcal{C}^*$
\item[(iii)]$F^*(f g) = F^*(f) F^*(g)$ $\forall f,g \in \mathcal{C}^*$
\item[(iv)]$F^*(\iota_v)=\iota_{F^0(v)}$ $\forall v \in \mathcal{C}^0$
\end{itemize}
\end{defi}
So a functor between categories maps morphisms in $\mathcal{C}$ to morphisms in $\mathcal{D}$,  maps objects in $\mathcal{C}$ to objects in $\mathcal{D}$, and preserves the domain and codomain maps, composition, and respects the identity morphisms.

Such an $F$ is also called a \emph{covariant functor}. If we replace (iii) by $F^*(f g) = F^*(g)  F^*(f)$ $\forall f,g \in \mathcal{C}^*$ then we call $F$ a \emph{contravariant functor}.

\begin{defi}\label{Lambda}
Let $E$ be a 2-coloured graph and $\mathcal{C}$ be a complete collection of squares in $E$.
For each $w \in BS(2,1)^+$, write $\Lambda^w$ for the set of all $\mathcal{C}$-compatible coloured-graph morphisms $\lambda: E_w \rightarrow E$. Let $\Lambda=\cup_{w\in BS(2,1)^+} \Lambda^w$.
\end{defi}

\begin{defi}
A \emph{$BS(2,1)^+$-graph} is a pair $(\Lambda_{BS}, d)$, where $\Lambda_{BS}$ is a countable category and $d$ is a functor $d: \Lambda_{BS} \rightarrow BS(m,n)^+$ satisfying the \emph{Factorisation property:} For every $\lambda \in Mor(\Lambda_{BS})$ and $w_1,w_2 \in BS(2,1)^+$ with $d(\lambda)=w_1w_2$, there are unique elements $\mu, \nu \in Mor(\Lambda_{BS})$ such that $\lambda=\mu\nu$, $d(\mu)=w_1$ and $d(\nu)=w_2$.
\end{defi}

\begin{thm}\label{major}[Major Theorem]
Fix a 2-coloured graph $E$ and a complete collection of squares $\mathcal{C}$ in $E$. Define $\Lambda$ as in Definition~\ref{Lambda}, endowed with the structure maps $r,s: \Lambda \rightarrow \Lambda^0$ and with composition as defined in Corollary~\ref{cor1}. Let $d: \Lambda \rightarrow BS(2,1)^+$ be as defined in Definition~\ref{degree}, then $(\Lambda, d)$ is a $BS(2,1)^+$-graph.
\end{thm}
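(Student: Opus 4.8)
The plan is to verify the three ingredients in the definition of a $BS(2,1)^+$-graph: that $\Lambda$ is a countable category under the structure maps $r,s$ and the composition of Corollary~\ref{cor1}; that $d$ is a functor onto $BS(2,1)^+$; and that the factorisation property holds. The last two will be almost immediate from material already established, so the substance of the proof is checking the category axioms of Definition~\ref{category} using traversals, exactly as in the earlier $\mathbb{N}^k$-case.

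For countability, I would note that for each fixed $w\in BS(2,1)^+$ the graph $E_w$ is finite, so a coloured-graph morphism $\lambda:E_w\to E$ is a map from a finite set into the countable vertex and edge sets of $E$; hence each $\Lambda^w$ is countable, and since $BS(2,1)^+$ is countable, so is $\Lambda=\bigcup_w\Lambda^w$. For the category axioms I would work throughout with traversals, which exist for every morphism by the construction of $\lfloor x\rfloor$. Composition is well defined by Corollary~\ref{cor1}: whenever $s(\mu)=r(\nu)$ there is a unique $\mu\nu\in\Lambda^{d(\mu)d(\nu)}$. For axiom (i), choosing $x$ traversing $\mu$ and $y$ traversing $\nu$ and using the description of $r,s$ in Definition~\ref{degree} together with $(\mu\nu)|_{E_{w_1}}=\mu$ and $(\mu\nu)|^*_{E_{[w_1,w_1w_2]}}=\nu$ yields $r(\mu\nu)=r(\mu)$ and $s(\mu\nu)=s(\nu)$. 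For associativity (ii), I would fix traversals $x_\lambda,x_\mu,x_\nu$ of composable $\lambda,\mu,\nu$; by Lemma~\ref{lem2} the concatenation $x_\lambda x_\mu x_\nu$ traverses both $(\lambda\mu)\nu$ and $\lambda(\mu\nu)$, so the uniqueness clause of Theorem~\ref{thm1} forces them equal. For the identities (iii), the morphisms $\lambda_v:E_e\to E$ of Definition~\ref{degree} satisfy $r(\lambda_v)=v=s(\lambda_v)$, and $\lambda_v\mu=\mu$, $\nu\lambda_v=\nu$ follow again by comparing traversals via Theorem~\ref{thm1}.

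To see that $d$ is a functor, I would regard the monoid $BS(2,1)^+$, with identity the empty word $e$, as a category with a single object, so that functor axioms (i) and (ii) hold trivially. Axiom (iii), the multiplicativity $d(\mu\nu)=d(\mu)d(\nu)$, is precisely the statement $\mu\nu\in\Lambda^{w_1w_2}$ from Corollary~\ref{cor1}, and axiom (iv) holds because $d(\lambda_v)=e$ is the identity. The factorisation property then follows from Corollary~\ref{cor1} together with Remark~\ref{rem1}: given $\lambda$ with $d(\lambda)=w_1w_2$, set $\mu:=\lambda|_{E_{w_1}}$ and $\nu:=\lambda|^*_{E_{[w_1,w_1w_2]}}$, so that $\lambda=\mu\nu$ with $d(\mu)=w_1$ and $d(\nu)=w_2$, and Remark~\ref{rem1} supplies the uniqueness of $\mu$ and $\nu$.

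The main obstacle is the traversal bookkeeping in the associativity argument: one must confirm that the concatenation of the chosen traversals genuinely traverses each of the two triple-composites, which rests on applying Lemma~\ref{lem2} twice and then invoking the uniqueness in Theorem~\ref{thm1}. Everything else is essentially a repackaging of Corollary~\ref{cor1} and Remark~\ref{rem1}, and the only conceptual point beyond the calculations is the observation that the semigroup $BS(2,1)^+$ should be viewed as a one-object category so that $d$ qualifies as a functor.
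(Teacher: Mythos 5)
Your proposal is correct and follows essentially the same route as the paper: it verifies the category axioms for $\Lambda$ via traversals, Lemma~\ref{lem2} and the uniqueness in Theorem~\ref{thm1} (the content of Theorem~\ref{category}), treats $BS(2,1)^+$ as a one-object category as in Lemma~\ref{BScat}, and derives functoriality of $d$ and the factorisation property from Corollary~\ref{cor1} and Remark~\ref{rem1}. The only addition is your explicit countability check, which the paper omits but which is a harmless (and arguably welcome) supplement.
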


\subsection{Constructing $BS(2,1)^+$-graphs}
\begin{thm}\label{category}
Fix a 2-coloured graph $E$ and a complete collection of squares $\mathcal{C}$ in $E$. The set $\Lambda$, endowed with the structure maps $r,s: \Lambda \rightarrow \Lambda^0$ and with composition as defined in Corollary~\ref{cor1}, is a category.
\end{thm}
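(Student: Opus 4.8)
The plan is to mirror the argument used for the $\mathbb{N}^k$-graph case and to verify directly that $\Lambda$, equipped with the composition supplied by Corollary~\ref{cor1} and the maps $r,s,d$ of Definition~\ref{degree}, satisfies the three axioms of Definition~\ref{category}. Corollary~\ref{cor1} already does the heavy lifting: whenever $s(\mu)=r(\nu)$ it produces a \emph{unique} $\mathcal{C}$-compatible morphism $\mu\nu:E_{w_1w_2}\to E$ with $(\mu\nu)|_{E_{w_1}}=\mu$ and $(\mu\nu)|^*_{E_{[w_1,w_1w_2]}}=\nu$, so composition is a well-defined partial operation on $\Lambda$. It then remains to check (i) compatibility of $r,s$ with composition, (ii) associativity, and (iii) the behaviour of the identity morphisms $\lambda_v$.

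For axiom (i) I would compute $s(\mu\nu)$ and $r(\mu\nu)$ straight from the restriction formulas. Writing $w_1=d(\mu)$ and $w_2=d(\nu)$, Definition~\ref{degree} gives $s(\mu\nu)=(\mu\nu)(w_1w_2)$; the translated restriction satisfies $(\mu\nu)|^*_{E_{[w_1,w_1w_2]}}(w_2)=(\mu\nu)(w_1w_2)$, and Corollary~\ref{cor1} identifies this with $\nu(w_2)=s(\nu)$. Dually $r(\mu\nu)=(\mu\nu)(e)=\mu(e)=r(\mu)$ since $(\mu\nu)|_{E_{w_1}}=\mu$. This is a direct substitution and presents no difficulty.

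For associativity (ii) I would argue via traversals rather than manipulate the morphisms directly. Given composable $\lambda,\mu,\nu$, fix paths $x_\lambda,x_\mu,x_\nu\in E^*$ traversing each; such traversals exist by the remark that every coloured-graph morphism has a traversal. Lemma~\ref{lem2} then shows that $x_\lambda x_\mu$ traverses $\lambda\mu$, hence $(x_\lambda x_\mu)x_\nu$ traverses $(\lambda\mu)\nu$, while the same lemma shows $x_\lambda(x_\mu x_\nu)$ traverses $\lambda(\mu\nu)$. Since the concatenation $x_\lambda x_\mu x_\nu$ is literally one and the same path in $E^*$, and since the degree map lands in the semigroup $BS(2,1)^+$, where multiplication is associative so that both composites have domain $E_{d(\lambda)d(\mu)d(\nu)}$, the uniqueness clause of Theorem~\ref{thm1} forces $(\lambda\mu)\nu=\lambda(\mu\nu)$.

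Finally, for (iii) I would take the morphisms $\lambda_v:E_e\to E$ of Definition~\ref{degree} as the identities. Since $d(\lambda_v)=e$, we have $r(\lambda_v)=\lambda_v(e)=v$ and $s(\lambda_v)=\lambda_v(d(\lambda_v))=\lambda_v(e)=v$. For a morphism $\mu$ with $r(\mu)=v$, the composite $\lambda_v\mu$ has degree $e\,d(\mu)=d(\mu)$ and restricts to $\mu$ on $E_{d(\mu)}$, so uniqueness in Corollary~\ref{cor1} gives $\lambda_v\mu=\mu$; symmetrically $\nu\lambda_v=\nu$ when $s(\nu)=v$. I expect the only genuine subtlety, and the step to watch, to be associativity: one must rely on the \emph{existence} of a common traversal to link the two bracketings, because in $BS(2,1)^+$ (unlike $\mathbb{N}^k$) a morphism's length is not determined by its degree, so the argument must rest on Lemma~\ref{lem2} together with the uniqueness in Theorem~\ref{thm1} rather than on any length bookkeeping.
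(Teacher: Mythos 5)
Your proposal is correct and follows essentially the same route as the paper's proof: axiom (i) by direct substitution into the restriction formulas, associativity via common traversals combined with Lemma~\ref{lem2} and the uniqueness in Theorem~\ref{thm1}, and the identities $\lambda_v$ handled through the uniqueness clause of Corollary~\ref{cor1} (the paper phrases this last step via Remark~\ref{rem1}, which is the same observation). Your closing remark correctly identifies why the traversal argument, rather than length bookkeeping, is what makes associativity work in $BS(2,1)^+$.
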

\begin{proof}
Fix $\mathcal{C}$-compatible coloured-graph morphisms $\lambda:E_{w_1}\rightarrow E$, $\mu:E_{w_2}\rightarrow E$, and $\nu:E_{w_3}\rightarrow E$ with $s(\lambda)=r(\mu)$ and $s(\mu)=r(\nu)$. Then the composition of $\mu$ and $\nu$ is a unique $\mathcal{C}$-compatible coloured-graph morphism $\mu\nu: E_{w_2w_3} \rightarrow E$,such that $(\mu\nu)|_{E_{w_2}}= \mu$ and $(\mu\nu)|^*_{E_{[w_2,w_2w_3]}}=\nu$. We have
\begin{align*}
&s(\mu\nu)=\mu\nu(w_2w_3)= \mu\nu|^*_{E_[w_2, w_2w_3]}(w_3)=\nu(w_3)=s(\nu), \text{ and }\\
&r(\mu\nu) = \mu\nu(e) = \mu\nu|_{E_{w_2}}(e) = \mu(e) = r(\mu)
\end{align*}so condition (i) holds.
\vspace{7pt}

For condition (ii), fix $x_{\lambda}, x_{\mu}, x_{\nu} \in E^*$ such that $x_{\lambda}$ traverses $\lambda$, $x_{\mu}$ traverses $\mu$ and $x_{\nu}$ traverses $\nu$. Corollary~\ref{cor1} gives the composition of $\lambda$ and $\mu$ as a unique $\mathcal{C}$-compatible coloured-graph morphism $\lambda\mu$ with $(\lambda\mu)|_{E_{w_1}}= \lambda$ and $(\lambda\mu)|^*_{E_{w_1,w_1w_2}}=\mu$. Then Lemma~\ref{lem2} gives that $x_{\lambda}x_{\mu}$ traverses $\lambda\mu$. A repeated application of Corollary~\ref{cor1} and Lemma~\ref{lem2} on traversals $x_{\lambda}x_{\mu}$ of $\lambda\mu$ and $x_{\nu}$ of $\nu$ gives
\begin{align*}
x_{\lambda} x_{\mu}x_{\nu} = (x_{\lambda} x_{\mu})x_{\nu} \text{ traverses } (\lambda\mu)\nu
\end{align*}
Similarly, $x_{\mu}x_{\nu}$ traverses $\mu\nu$, and
\begin{align*}
x_{\lambda} x_{\mu}x_{\nu} = x_{\lambda} (x_{\mu}x_{\nu}) \text{ traverses } \lambda(\mu\nu)
\end{align*}
By Theorem~\ref{thm1}, we conclude that $(\lambda\mu)\nu= \lambda(\mu\nu)$.
\end{proof}
\vspace{7pt}

Finally, for $v\in E^0$, we have previously defined the identity morphism $\lambda_v:E_e \rightarrow E$ by $\lambda_v(e) = v$. We have $r(\lambda_v)= \lambda_v(e) = v$ and $s(\lambda_v)= \lambda_v(d(\lambda_v))=\lambda_v(e) = v$. Now suppose $r(\mu)=\mu(e)=:v$, we wish to show that $\mu=\lambda_v\mu$. We have $\mu|_{E_e}: E_e \rightarrow E$ with $\mu|_{E_e}(e)= \mu(e)=v=\lambda_v(e)$. So $\lambda_v=\mu|_{E_e}$. Also, $\mu|^*_{E_{[e,w_2]}}:E_{[e, w_2]}\rightarrow E$. By definition, $E_{[e, w_2]}= E_{w_2}$ and $\mu|^*_{E_{[e,w_2]}}(a)=\mu(ea)=\mu(a)$. So then $\mu=\mu|^*_{E_{[e,w_2]}}$. Remark~\ref{rem1} gives $\mu= \mu|_{E_e}\mu|^*_{E_{[e,w_2]}}$, hence $\mu=\lambda_v\mu$.

For $s(\nu)=\nu(d(\nu))=\nu(w_3):=v$, we wish to show $\nu\lambda_v=\nu$. Restricting the domain of $\nu$ gives both $\nu|_{E_{w_3}}: E_{w_3}\rightarrow E$ and $\nu|^*_{E_{[w_3,w_3]}}: E_{[w_3,w_3]}\rightarrow E$. It is trivial to see that $\nu|_{E_{w_3}}=\nu$, and that the subgraph $E_{[w_3,w_3]}$ of $E_{w_3}$ is $E_e$. Then $\nu|^*_{E_{[w_3,w_3]}}(e)=\nu(w_3e)=\nu(w_3) = v$. So $\nu|^*_{E_{[w_3,w_3]}}=\lambda_v$. Again, Remark~\ref{rem1} gives $\nu=\nu|_{E_{w_3}}\nu|^*_{E_{[w_3,w_3]}}$. Thus $\nu\lambda_v=\nu$. Condition (iii) holds hence $\Lambda$ is a category.

\begin{lem}\label{BScat}
$BS(2,1)^+$ is a category with $Obj(BS(2,1)^+) = \{ \star \}$ and $\iota_{\star} = e$, $Mor(BS(2,1)^+) = BS(2,1)^+$, maps $r,s: w \mapsto \star$ for $w\in BS(2,1)^+$ and composition $w_1 \cdot w_1 := w_1w_2$.
\end{lem}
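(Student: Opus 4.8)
The plan is to recognise this as the standard fact that a monoid is exactly a category with a single object, and simply to check the three axioms of Definition~\ref{category} against the monoid structure of $BS(2,1)^+$. With $C^0 = \{\star\}$ and $C^* = BS(2,1)^+$, the first thing I would observe is that the composability condition $s(f) = r(g)$ is vacuous: since $r$ and $s$ both send every element to the unique object $\star$, we have $s(w_1) = \star = r(w_2)$ for every pair $w_1, w_2 \in BS(2,1)^+$. Hence the partially defined product is in fact defined on all of $C^* \times C^*$, and it coincides with the semigroup multiplication $w_1 \cdot w_2 = w_1 w_2$.

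For axiom (i), since $r$ and $s$ are constant maps onto $\star$, we automatically have $r(w_1 w_2) = \star = r(w_1)$ and $s(w_1 w_2) = \star = s(w_2)$, so the condition holds trivially. For axiom (ii), the composability hypotheses are always met, and associativity $(w_1 w_2) w_3 = w_1 (w_2 w_3)$ is inherited directly from the associativity of multiplication in $BS(2,1)^+$ as a semigroup. For axiom (iii), I would use that $e$ is the identity element of $BS(2,1)^+$: taking $\iota_\star = e$ gives $r(e) = \star = s(e)$ by constancy of $r,s$, while $e w = w$ and $w e = w$ for all $w$ express exactly that $e$ is a two-sided identity, verifying $\iota_\star f = f$ and $g \iota_\star = g$.

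The only substantive ingredient is therefore that $BS(2,1)^+$ is a monoid, namely associative with the identity word $e$, so there is no real obstacle. The point worth stressing is that having a single object forces the source--range matching condition to be automatic, which is precisely why every axiom reduces to a monoid property of $BS(2,1)^+$ rather than to anything requiring the graph-theoretic machinery developed earlier.
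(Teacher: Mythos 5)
Your proof is correct and follows essentially the same route as the paper: both verify axioms (i)--(iii) of Definition~\ref{category} directly, noting that the single object makes the source--range condition automatic, that associativity is inherited from the semigroup structure, and that the empty word $e$ serves as $\iota_\star$. Your explicit remark that the partial product is therefore totally defined is a nice clarification but not a substantive difference.
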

\begin{proof}
Fix morphisms $w_1,w_2,w_3 \in BS(2,1)^+$. There is only one object in $BS(2,1)^+$, so the source and range of any morphism in $BS(2,1)^+$ is precisely the object $\star$. It follows that
\begin{align*}
&s(w_1w_2)= \star = s(w_2)\\
&r(w_1w_2)= \star = r(w_1) \text{ $\forall w_1,w_2\in BS(2,1)^+$}
\end{align*}
So condition (i) holds. For condition (ii), we know that $(w_1w_2)w_3=w_1(w_2w_3)$ $\forall w_1,w_2,w_3 \in BS(2,1)^+$ given that associativity is inherited from the property that makes $BS(2,1)^+$ a semigroup. 
We also have $r(\iota_{\star})=r(e)=\star=s(e)=s(\iota_{\star})$ and $\iota_{\star}w_1=ew_1=w_1$ and $w_1\iota_{\star}=w_1e=w_1$ $\forall w_1 \in BS(2,1)^+$, proving condition (iii) for all $w_1 \in BS(2,1)^+$. Hence, $BS(2,1)^+$ is a category.
\end{proof}

\begin{proof}{\emph{of Theorem~\ref{major}}}

We have that $\Lambda=(\Lambda^0, \Lambda^*, r_{\Lambda}, s_{\Lambda})$ is a category, directly from Theorem~\ref{category} and that $BS(2,1)^+=(\{\star\}, BS(2,1)^+, r_{BS}, s_{BS})$ is a category from Lemma~\ref{BScat}. 

\vspace{7pt}
All that remains to show is that $d: \Lambda \rightarrow BS(2,1)^+$ is a functor which satisfies the factorisation property. We have
\begin{align*}
&d^0: \Lambda^0 \rightarrow \star \\
&d: \Lambda^* \rightarrow BS(2,1)^+
\end{align*}
Since $BS(2,1)^+$ as a category has only one object $\star$, $d$ respects $r_{BS}$ and $s_{BS}$
\begin{align*}
&s_{BS}(d(\lambda))= \star = d^0(s_{\Lambda}) \text{, and,} \\
&r_{BS}(d(\lambda))= \star = d^0(r_{\Lambda})
\end{align*} for all $\lambda \in \Lambda^*$.

To see that $d$ respects composition, fix traversals $x_{\lambda_1}$ of $\lambda_1$ and $x_{\lambda_2}$ of $\lambda_2$. Then
\begin{align*}
d(\lambda_1\lambda_2)&= d^*(x_{\lambda_1}x_{\lambda_2})\\
&=d^*(x_{\lambda_1})d^*(x_{\lambda_2}) \text{ by Corollary~\ref{cor1}}\\
&=d(\lambda_1)d^*(\lambda_2)
\end{align*}

We must now show that $d(\lambda_v)=\lambda_{d^0(v)}$ for all $v \in \Lambda^0$. We have 
\begin{align*}
d(\lambda_v) = e = \iota_{\star} = \lambda_{d^0(v)} \text{ $\forall v \in \Lambda^0$}
\end{align*}
So $d:\Lambda \rightarrow BS(2,1)^+$ is a functor. That $d$ satisfies the factorisation property follows directly from Corollary~\ref{cor1}. Hence $(\Lambda, d)$ is a $BS(2,1)^+$-graph.
\end{proof}

\newpage
\section{References}
\begin{itemize}

\item[]{[1]} N. Brownlowe, A. Sims, and S.~T. Vittadello, \emph{Co-universal $C^*$-algebras associated to generalised graphs}, Israel J. Math. \textbf{193} (2013), 399--440.

\item[]{[2]}\label{2}  R. Hazlewood, I. Raeburn, A. Sims, and S.~B.~G. Webster, \emph{Remarks on some fundamental results about higher-rank graphs and their $C^*$-algebras}, Proc. Edinburgh Math. Soc. \textbf{56} (2013), 575--597.

\item[]{[3]} A. Kumjian  and D. Pask, \emph{Higher rank graph $C^*$-algebras}, New York J. Math. \textbf{6} (2000), 1--20.
\end{itemize}

\end{document}